\numberwithin{equation}{section}
\newcommand{\be}{\begin{equation}}
\newcommand{\ee}{\end{equation}}
\newcommand{\ba}{\begin{array}}
\newcommand{\ea}{\end{array}}
\newcommand{\bea}{\begin{eqnarray}}
\newcommand{\eea}{\end{eqnarray}}
\newcommand{\bee}{\begin{eqnarray*}}
\newcommand{\eee}{\end{eqnarray*}}
\newtheorem{Thm}{Theorem}
\newtheorem{Lemma}{Lemma}
\newtheorem{Prop}{Proposition}
\newtheorem{Definition}{Definition}
\newtheorem{Cor}{Corollary}
\newtheorem{Rk}{Remark}
\newcommand{\appendice}[1]
{
  \renewcommand{\thesection}{\fnsymbol{section}}
  \renewcommand{\thesection}{\arabic{section}}
}
\def\ds{\displaystyle}
\def\RR{{\mathbb R}}
\def\NN{{\mathbb N}}
\def\lim{\mathop{\rm lim}}
\def\sup{\mathop{\rm sup}}
\def\e{\varepsilon}
\def\log{{\rm log}}
\def\pa{\partial}
\def\fref#1{{\rm (\ref{#1})}}
\title[Hardy-Littlewood type stability  inequalities] {Extended rearrangement inequalities   and applications to some quantitative stability results}
\author{M. Lemou}
\address{CNRS and IRMAR
University of Rennes 1, Campus de Beaulieu,
Rennes, France.}
\date{}
\begin{document}
\maketitle
\begin{abstract}
In this paper, we prove a new functional inequality of  Hardy-Littlewood type for generalized rearrangements of functions.  We then show how this inequality provides  {\em quantitative} stability results of steady states to evolution systems that essentially preserve the rearrangements and some suitable energy functional, under minimal regularity assumptions on the perturbations.
In particular, this inequality yields a {\em quantitative} stability result of a large class of steady state solutions to the Vlasov-Poisson systems, and more precisely we derive a quantitative  control of the $L^1$ norm of the perturbation by the relative Hamiltonian (the energy functional) and  rearrangements. A general non linear stability  result 
 has been  obtained in \cite{LMR} in the gravitational context, however the  proof relied in a crucial way on compactness arguments which by construction provides no quantitative
 control of the perturbation. Our functional inequality  is also applied   to the context of 2D-Euler system and also provides quantitative stability results of a large class of steady-states  to this system in a natural energy space.  
\end{abstract}


\section{Motivations and main results}

The goal of this work is to prove a new functional inequality for generalized rearrangements which may be applied to get {\em quantitative stability
results} for  evolution systems that  essentially preserve the  equimeasurability (or the symmetric rearrangements) and a suitable energy functional (Hamiltonian, angular momentum, etc).  Examples of such evolution systems are the Vlasov-Poisson and 2D Euler models, on which the application of our inequality will be focused. We first consider a class of steady states $f_0$ to  the Vlasov-Poisson system, which are decreasing functions of their microscopic energy,  and establish a  functional inequality  that gives an explicit control of the $L^1$ distance between $f_0$ and any  function $f$ in terms of  a relative energy functional ${\mathcal H} (f)- {\mathcal H} (f_0)$ and the  $L^1$ distance between the two rearrangements $f^*$ and $f_0^*$ of $f$ and $f_0$.  
An immediate consequence is the nonlinear stability of such non homogeneous steady states  $f_0$.  This stability result was  already established in \cite{LMR} but  a main part of the proof  is based on compactness arguments which, by construction, provides no {\em quantitative control} of the perturbation.  
As a consequence of our functional inequality  a complete quantitative stability result for models like gravitational Vlasov-Poisson systems is therefore obtained.  Note that in the particular case of homogeneous steady states to Vlasov-Poisson for plasmas,  a quantitative control of equimeasurable perturbations has already been obtained
in \cite{MP2},  and this result has been used in \cite{cag-rou} to prove that the $N$ particle approximation of these steady states is uniformly valid on a time scale much larger that the usual $\log N$ scale.

We then focus on the application to the 2D Euler system  and first revisit the stability results that have been obtained in \cite{MP1} for some particular classes of  steady states: planer couette flow and radial steady states.  We show that these results can be easily derived as an application of our functional inequality under minimal regularity assumptions. Moreover, our proof allows to  extend the results in \cite{MP1} to situations where the domain is not bounded and also to situations where the perturbations are not necessarily  compactly supported or in $L^\infty$.
 The stability of more general steady states to 2D Euler system have been proved in  \cite{Burt1}  on the basis of variational approaches and compactness arguments. More precisely, it is proven in \cite{Burt1} that all strict local maximizers and minimizers of the energy are non linearly stable in  $L^p$, $p>4/3$. In the present work, we show that our functional inequality can also be applied to get a quantitative stability result for steady states which are decreasing function of the stream function (i.e. minimizers of the energy). While the stability of the whole class of steady states which are increasing in terms of the stream function is still an open question, we believe that our strategy may be applied to get a quantitative stability of the sub-class of this set of steady sates which are strict local maximizers of the energy.  This last problem is currently under study and is beyond the scope of the present paper.
 
We finally emphasize that asymptotic stability results have been obtained in the recent past 
 for a class of homogenous  steady states to Vlasov-Poisson (Landau damping) in the seminal work  \cite{MV1},  with strong regularity assumptions.  Similar important results have been obtained in \cite{Bed.Mas} for planer couette flow steady  states solutions to the incompressible 2D Euler system. 
Here we are interested in a different class of steady states including: non homogeneous  steady states for VP, radial steady state solutions  to 2D Euler, and  steady sate solutions to 2D Euler which are decreasing profiles of the stream function. Note that the possible Landau damping phenomena around these steady states is a challenging open question even at the linear level. Here, we only prove the non linear stability of these steady states in a quantitative way, under minimal regularity assumptions. 



The paper is organized as follows. In subsection 1.1  we introduce basic notations, define a generalized notion of rearrangements, and state  our refined Hardy-Littlewood type inequality. In subsections 1.2  and  1.3 we investigate the applications to Vlasov-Poisson and 2D-Euler systems, and state the corresponding
quantitative stability inequalities. In section 2, we give the proofs of the statements of subsection 1.1 and in particular we prove the extended  Hardy-Littlewood functional inequalities. Finally, in section 3, we give the proofs of the stability inequalities (stated in   subsections 1.2  and  1.3) for both Valsov-Poisson and 2D Euler systems.


\subsection{Definitions and main results}

Let $\Omega$ be a measurable domain of $\RR^d$, $d\geq 1$, which is not necessarily bounded. For any nonnegative function $q\in L^1(\Omega)$,  we  define the  associated distribution function 
\be
\label{mu}
\mu_q(t) = \mbox{meas}\{ x\in \Omega,   q(x) >t\}, \quad \forall t\geq 0.
\ee
In particular $\mu_q(t)=0$ if $t\geq \|q\|_{L^{\infty}}$.  Here, and in all the sequel, $\mbox{meas} (A)$ denotes the usual Lebesgue measure on $\RR^d$ of a measurable set $A$, and $\mbox{Supp}(q)$ will denote the support of a function $q\in L^1$. Note that $\mu_q(0)\leq \mbox{meas}(\mbox{Supp}(q))$ 
may be infinite but $\mu_q(t)$ is finite for all $t>0$ since we have
\be
\label{mufinite}
\mu_q(t) \leq \mbox{min}\left(\mu_q(0),\frac{1}{t} \|q\|_{L^1(\Omega)}\right) \qquad \forall t>0.
\ee
By analogy with the usual Schwarz rearrangement on $\RR^d$, we define the rearrangement  $q^*$  of $q$ on $\Omega$  as follows. Let $q^{\sharp}$ be the pseudo-inverse of the function $\mu_q$ defined by
\be
\label{def-qdiez}
q^{\sharp}(s)= \mbox{inf}\{t\geq 0, \mu_q(t)\leq s\}=\mbox{sup}\{t\geq 0, \mu_q(t)> s\} ,\quad \forall s\geq 0,
\ee  
with, in particular, $ q^{\sharp}(0)= \|q\|_{L^{\infty}} \in \RR \cup \{+\infty\}$ and $ q^{\sharp}(+\infty)=0.$
We then define the rearrangement  $q^*$ of $q$ on $\Omega$   by the formula
\be
\label{def-q*}
q^*(x) = q^{\sharp}\left(\mbox{meas}(B_d(0,|x|) \cap \Omega)\right), \quad \forall \ x \in \Omega,
\ee 
where $B_d(0,|x|)$ is the ball (in $\RR^d$) centered at $0$ with radius $|x|$.  It is well known that $q$ and $q^*$ are  equimeasurable, which means that 
$ \mu_q (\lambda)= \mu_{q^*}(\lambda), \forall \lambda\geq 0,$ and  in particular
$$ \|q\|_{L^p(\Omega)} = \|q^*\|_{L^p(\Omega)}, \quad  \mbox{for all} \quad 1\leq p\leq \infty, $$
as soon as such quantities exist. The equimeasurability property of $q$ an $q^*$ is a  particular consequence of a more general statement in Proposition \ref{f*theta-equi} which will be proved later on.

We now introduce an extension of this notion of rearrangement which will be an important tool in this work.
\begin{Definition}
\label{def1}
Let  $\sigma$ be a measurable function on  a domain $\Omega \subset \RR^d$, $d\geq 1$, and let  $a_\sigma$ be  the following associated  nondecreasing   function 
\be
\label{atheta}
\forall e\in \RR, \quad a_{\sigma}(e) = \mbox{meas}\{ x\in \Omega,   \sigma(x) < e\} \in [0,\mbox{meas} (\Omega)].
\ee 
Assume that  the  set $\{e\in \RR:  a_{\sigma}(e) < \mbox{meas} (\Omega)\}$ is not empty and that
\be
\label{A2} e_{min} := \mbox{\em ess inf} \  \sigma <  e_{max}:= \sup\{e\in \RR;  a_{\sigma}(e) < \mbox{meas} (\Omega)\}.
\ee
Assume further that
 \be
\label{levelset0}  \quad  \mbox{meas}\{x\in \Omega;  \sigma(x)=e\}=0, \quad \forall e\in ]-\infty, e_{max}[,
\ee 
and that  
$\lim\limits_{\scriptsize{\begin{array}{c}e\to e_{max}\\ e< e_{max}\end{array}}} a_\sigma (e)= \mbox{meas} (\Omega).$

Then we define the $\sigma$-rearrangement  $q^{*\sigma}$ of  a nonnegative function $q\in L^1(\Omega)$  by the formula
\be
\label{def-q*theta}
q^{*\sigma}(x) = q^{\sharp}(a_\sigma(\sigma(x)))\mathds{1}_{\sigma(x)<e_{max}}, \quad \forall x\in \Omega,
\ee
where $q^{\sharp}$ is defined by \fref{def-qdiez}, with $ q^{\sharp}(0)= \|q\|_{L^{\infty}} \in \RR \cup \{+\infty\}$ and $ q^{\sharp}(+\infty)=0.$
\end{Definition}
\begin{Rk}
\label{Rk1} The domain $\Omega$ is not necessarily of finite measure, and our definition will be used for domains with finite measure (bounded domains for instance) and for domains of infinite measure ($\RR^d$ for instance) as well.  Note  that we may have  $e_{max}=+\infty$ and/or $e_{min}= -\infty$.
\end{Rk}
Note that  no integrability  assumption is assumed on $\sigma$,  in particular $\sigma$ is not necessarily in some $L^p$. In fact Definition \ref{def1} of $q^{*\sigma}$ provides an extension of  the usual Schwarz symmetrization  which  corresponds to the particular case $\Omega = \RR^d$ and $\sigma(x)=|x|$;  we have $q^* = q^{*|x|} $.   It also extends  the rearrangement introduced in \cite{LMR}, which is a rearrangement with respect to a specific quantity (the microscopic energy), to analyze the stability problem of steady state solutions to the Vlasov-Poisson equation. Slightly weaker  assumptions may probably be made on $\sigma$ to define our $\sigma-$rearrangement \fref{def-q*theta}, but we prefer to keep our original assumptions in order to simplify the statement, and as we shall see, the framework of definition \ref{def1} will be sufficient for our applications.  
This new $\sigma-$rearrangement has the following important property which will be proved  in section \ref{proof-prop1}.
\begin{Prop}
\label{f*theta-equi}
Let $\sigma $ be a measurable function on a domain $\Omega \subset \RR^d$ satisfying the assumptions of  Definition \ref{def1}. Then for any nonnegative function $f\in L^1(\Omega)$, the function $f^{*\sigma}$ defined on $\Omega$ by 
\fref{def-q*theta} is equimeasurable to $f$,  that is
$$f^* =  (f^{*\sigma})^*, \quad \mbox{or equivalently}\quad  \mu_f = \mu_{f^{*\sigma}}.$$ 
Moreover, $f^{*\sigma}$ is the only nonnegative function of $L^1(\Omega)$ which is a non increasing function of $\sigma(x)$ and is equimeasurable with $f$.
\end{Prop}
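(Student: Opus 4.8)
The plan is to prove both assertions—equimeasurability and uniqueness—by reducing everything to the one-dimensional nonincreasing rearrangement $f^{\sharp}$ precomposed with the rank function $a_\sigma\circ\sigma$.

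First I would record the elementary equivalence
$$f^{\sharp}(s)>t \iff s<\mu_f(t),\qquad s\ge 0,\ t\ge 0,$$
which follows from the definition \fref{def-qdiez} of $f^{\sharp}$ together with the monotonicity and right-continuity of $\mu_f$ (both immediate from \fref{mu}). Using it, for $t\ge 0$ the superlevel set of $f^{*\sigma}$ can be computed explicitly: since $f^{*\sigma}=0$ on $\{\sigma\ge e_{max}\}$, only $\{\sigma<e_{max}\}$ contributes and there
$$\{f^{*\sigma}>t\}=\{x\in\Omega:\ \sigma(x)<e_{max},\ a_\sigma(\sigma(x))<\mu_f(t)\}.$$
Everything then rests on the \emph{rank identity}
\be
\mbox{meas}\{x\in\Omega:\ \sigma(x)<e_{max},\ a_\sigma(\sigma(x))<c\}=c,\qquad 0\le c\le \mbox{meas}(\Omega),
\ee
applied with $c=\mu_f(t)\le\mbox{meas}(\Omega)$; this gives $\mu_{f^{*\sigma}}(t)=\mu_f(t)$ for every $t\ge 0$, i.e. $\mu_f=\mu_{f^{*\sigma}}$, which is equivalent to $f^*=(f^{*\sigma})^*$ since $q^*$ depends on $q$ only through $\mu_q$.

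To establish the rank identity I would exploit the hypotheses of Definition \ref{def1}. Assumption \fref{levelset0} forces $a_\sigma$ to be continuous on $]-\infty,e_{max}[$, because the right-hand jump of the nondecreasing function $a_\sigma$ at a point $e$ equals $\mbox{meas}\{\sigma=e\}$; moreover $a_\sigma$ vanishes below $e_{min}$ and tends to $\mbox{meas}(\Omega)$ as $e\uparrow e_{max}$. Setting $e(c)=\sup\{e<e_{max}:\ a_\sigma(e)<c\}$, continuity yields $a_\sigma(e(c))=c$, while monotonicity yields, up to the null set $\{\sigma=e(c)\}$, the set identity $\{a_\sigma(\sigma(\cdot))<c\}=\{\sigma<e(c)\}$, whose measure is $a_\sigma(e(c))=c$. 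This is precisely where \fref{levelset0} and the limit assumption $\lim_{e\uparrow e_{max}}a_\sigma(e)=\mbox{meas}(\Omega)$ are genuinely needed, and I expect this to be the main obstacle to formulate cleanly, chiefly because $a_\sigma$ may be constant on subintervals (where $\sigma$ essentially takes no values) and because the case $\mbox{meas}(\Omega)=+\infty$ has to be handled with care.

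For uniqueness, the key observation is that any nonnegative $g\in L^1(\Omega)$ of the form $g=G(\sigma)$ with $G$ nonincreasing has superlevel sets $\{g>t\}=\{\sigma<e_G(t)\}$ modulo a null set, where $e_G(t)=\sup\{e:\ G(e)>t\}$. Hence, if $g$ and $h:=f^{*\sigma}$ are both nonincreasing functions of $\sigma$ and equimeasurable, then for each $t>0$ the sets $\{g>t\}$ and $\{h>t\}$ both have this form and the same measure $\mu_f(t)$, which is \emph{finite} by \fref{mufinite}; two such sets $\{\sigma<e_1\}$, $\{\sigma<e_2\}$ with $a_\sigma(e_1)=a_\sigma(e_2)<\infty$ differ only by $\{e_1\le\sigma<e_2\}$, of measure $|a_\sigma(e_2)-a_\sigma(e_1)|=0$. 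Thus $\{g>t\}$ and $\{h>t\}$ coincide up to a null set for a.e. $t$, and the layer-cake formula together with Fubini gives
$$\|g-h\|_{L^1(\Omega)}=\int_0^{\infty}\mbox{meas}\big(\{g>t\}\,\triangle\,\{h>t\}\big)\,dt=0,$$
so $g=f^{*\sigma}$ a.e. Finally one checks that $f^{*\sigma}$ itself qualifies: it is nonnegative, belongs to $L^1$ by the equimeasurability just proved, and is a nonincreasing function of $\sigma$ since $f^{\sharp}$ is nonincreasing and $a_\sigma$ nondecreasing, the value $0$ imposed on $\{\sigma\ge e_{max}\}$ being the minimal one.
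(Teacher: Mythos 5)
Your proof is correct. The equimeasurability half follows the paper's own route almost verbatim: the paper also starts from the equivalence $f^{\sharp}(s)>t \iff s<\mu_f(t)$ (its formula \fref{zarby}), computes the superlevel set of $f^{*\sigma}$ as $\{\sigma<e_{max},\ a_\sigma(\sigma(\cdot))<\mu_f(t)\}$, and then establishes your ``rank identity'' -- except that the paper packages it through Lemma \ref{lem_atheta}: the identity $a_\sigma\circ b_\sigma=\mathrm{id}$ in \fref{inv-droite}, the equivalence \fref{equiv-btheta}, and the fact \fref{alkalam} that level sets of $a_\sigma\circ\sigma$ are null, whereas you re-derive these facts inline via $e(c)=\sup\{e<e_{max}:a_\sigma(e)<c\}$; this is the same mechanism, just not factored out as a lemma. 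The uniqueness half is where you genuinely diverge. The paper proves a \emph{pointwise} inequality: from $\mu_g(g(x))\le a_\sigma(\sigma(x))$ and the definition of $f^{\sharp}$ as an infimum it gets $g(x)\ge f^{*\sigma}(x)$ everywhere, and then concludes from equality of the integrals (both functions being equimeasurable with $f$) that $g=f^{*\sigma}$ a.e.; this forces a three-case analysis, and in the delicate case $e_{max}<+\infty$, $\mbox{meas}(\Omega)=+\infty$ the paper must separately show, using $g\in L^1$, that the profile $F$ vanishes on $[e_{max},+\infty)$. You instead identify the superlevel sets of \emph{any} nonincreasing function of $\sigma$ as sublevel sets $\{\sigma<e\}$ modulo null sets, use the finiteness of $\mu_f(t)$ for $t>0$ together with \fref{levelset0} to conclude that the symmetric differences $\{g>t\}\,\triangle\,\{f^{*\sigma}>t\}$ are null, and integrate via the layer-cake formula. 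Your route buys uniformity: the finiteness $\mu_g(t)\le\|g\|_{L^1}/t$ automatically rules out $e_G(t)\ge e_{max}$ when $\mbox{meas}(\Omega)=+\infty$, so the paper's Case 3 needs no separate treatment; the price is that a careful write-up must still spend a line on the boundary situation $e_G(t)\ge e_{max}$ (impossible for $t>0$ when $\mbox{meas}(\Omega)=+\infty$, and harmless -- both sets have full finite measure -- otherwise), a point you gesture at but do not spell out. The paper's pointwise trick, by contrast, avoids symmetric differences entirely and is arguably shorter once the case analysis is accepted.
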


Now, we state our main inequalities, which are refinements of well-known Hardy-Littlewood type inequalities.  The proofs of the following  statements will be given in section \ref{proofth1}.

\begin{Thm}[A refined Hardy-Littlewood type inequality]
\label{thm1}
Let $\sigma$ be a real-valued  measurable function on  a domain $\Omega\subset \RR^d$, $d\geq 1$,  satisfying the assumptions of definition \ref{def1}, and consider the associated Jacobian function $a_\sigma$ defined  by 
\be
\label{athetaHL}
 \forall e \in \RR, \qquad a_{\sigma}(e) = \mbox{meas}\{ x\in \Omega, \  \sigma(x) < e\} \in [0,\mbox{meas}(\Omega)].
\ee 
and its pseudo-inverse $b_\sigma$  defined  on $]0,+\infty[$  by
\be
\label{btheta}
 \quad b_{\sigma}(\mu) = \sup\{ t<e_{max}:   a_\sigma(t) \leq \mu\}, \quad \forall \ \mu \geq 0,
\ee 
 with the convention that $b_{\sigma}(\mu) = -\infty$ if $\{ t<e_{max}:   a_\sigma(t) \leq \mu\} =\varnothing $.

Assume that the following function  $B_\sigma$ is well defined on $[0,\mbox{meas}(\Omega)[$, i. e.
\be
\label{Btheta}
B_\sigma (\mu) = \int_{a_\sigma (\sigma(x) )<\mu}   \sigma(x) dx \ \ \mbox{is finite}, \quad  \mbox{for all} \   \mu \in [0,\mbox{meas}(\Omega)[.
\ee
Then\\
i) The function $B_\sigma$ is convex on $[0,\mbox{meas}(\Omega)[$.\\
ii) Let
$$H_\sigma(\mu) = \inf_{0<s\leq\mu}  \frac{B_\sigma(\mu +s)+ B_\sigma(\mu-s)- 2 B_\sigma(\mu)}{s^2}, \quad \forall \mu>0,$$
and let $q$ be any nonnegative and nonzero  function $q\in L^1(\RR^d)$ such that
\be
\label{Kq*}
K(q^*, \sigma) =4 \int_0^{\|q\|_{L^{\infty}}} \frac{dt}{H_\sigma(\mu_q(t))} <\infty,
\ee
where $\mu_q=\mu_{q^*}$ is defined by \fref{mu}. 
 Then for  any nonnegative function $f\in L^1(\RR^d)$ we have the inequality
\be
 \begin{array}{ll}
 \label{ineq1}
\ds  \left( \|f-q^{*\sigma} \|_{L^1} +\|q\|_{L^1}-\|f\|_{L^1} \right)^2 & \leq \ds  K(q^*, \sigma)\left[ \int_{\Omega}  \sigma(x) (f(x)-q^{*\sigma}(x))dx\  \right.\\ &\ds \left.  \hspace{-2cm}+  \int_{0}^{+\infty}[\beta_{f^*,q^*}(s)b_\sigma(2\mu_q(s))- \beta_{q^*,f^*}(s)) 
b_\sigma(\mu_q(s))]ds \right],
  \end{array}
\ee 
 where $b_\sigma$ is the pseudo-inverse of $a_\sigma$ defined by \fref{btheta} and
 \be
 \label{betafg}
 \beta_{f,g}(s)= \mbox{meas}\{ x\in \Omega : f(x)\leq s <g(x)\},  \  \forall s\geq 0. 
 \ee
for all nonegative functions  $f,g \in L^1(\Omega).$

In particular the following refined Hardy-Littlewood type  inequality  holds for any nonnegative function $f\in L^1(\RR^d)$ such that the  quantity $K(f^*, \sigma)$ defined by \fref{Kq*} is finite 
\be
\label{ineq2}
\|f-f^{*\sigma}\|_{L^1}^2   \leq   K(f^*, \sigma) \int_{\Omega} \sigma(x) (f(x)- f^{*\sigma}(x)) dx.
\ee
\end{Thm}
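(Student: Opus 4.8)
The plan is to regard the final inequality \fref{ineq2} as the special case $q=f$ of \fref{ineq1}, in which $\mu_q=\mu_f$ so that both $\beta_{f^*,q^*}$ and $\beta_{q^*,f^*}$ vanish identically and the mass term $\|q\|_{L^1}-\|f\|_{L^1}$ disappears. I will therefore first give a self-contained proof of \fref{ineq2}, which already exhibits the whole mechanism, and then indicate the extra bookkeeping required for \fref{ineq1}.

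The preliminary step is the integral representation underlying part (i). Under the hypotheses of Definition \ref{def1}, assumption \fref{levelset0} makes $a_\sigma$ continuous on $]-\infty,e_{max}[$, and together with $a_\sigma(e)\to\mbox{meas}(\Omega)$ as $e\to e_{max}^-$ this shows that the map $u:=a_\sigma\circ\sigma$ pushes the Lebesgue measure of $\Omega$ forward to the Lebesgue measure on $[0,\mbox{meas}(\Omega)[$, i.e. $\mbox{meas}\{u<\mu\}=\mu$, and that $\sigma=b_\sigma\circ u$ almost everywhere. Consequently $B_\sigma(\mu)=\int_{u(x)<\mu}b_\sigma(u(x))\,dx=\int_0^\mu b_\sigma(m)\,dm$, which is convex because $b_\sigma$ is nondecreasing; this proves (i). The same identity gives the bathtub principle $B_\sigma(\mu)=\min\{\int_A\sigma:\mbox{meas}(A)=\mu\}$, the minimizer being the sublevel set $\{u<\mu\}$, which is exactly $\{f^{*\sigma}>t\}$ when $\mu=\mu_f(t)$.

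The crux is the following quantitative refinement of the bathtub principle. Let $A$ be measurable with $\mbox{meas}(A)=\mu$, let $A^{*\sigma}=\{u<\mu\}$ be the optimal sublevel set, and set $s=\mbox{meas}(A\setminus A^{*\sigma})=\tfrac12\mbox{meas}(A\triangle A^{*\sigma})\le\mu$. Writing $\int_A\sigma-B_\sigma(\mu)=\int_{A\setminus A^{*\sigma}}\sigma-\int_{A^{*\sigma}\setminus A}\sigma$ and using $\sigma=b_\sigma\circ u$ with $u\ge\mu$ on the first set and $u<\mu$ on the second, the monotonicity of $b_\sigma$ forces (mass $s$ of density $\le1$ placed as far down, resp. up, as possible) \[\int_{A\setminus A^{*\sigma}}\sigma\ge B_\sigma(\mu+s)-B_\sigma(\mu),\qquad \int_{A^{*\sigma}\setminus A}\sigma\le B_\sigma(\mu)-B_\sigma(\mu-s),\] whence \[\int_A\sigma-B_\sigma(\mu)\ \ge\ B_\sigma(\mu+s)+B_\sigma(\mu-s)-2B_\sigma(\mu)\ \ge\ H_\sigma(\mu)\,s^2,\] the last step being the definition of $H_\sigma$ (legitimate since $0<s\le\mu$). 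I expect this lemma to be the main obstacle: it is where mere convexity is converted into a quantitative control, and it relies on the measure-preserving change of variables above holding under the weak, possibly infinite-measure, hypotheses on $\sigma$.

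It remains to assemble \fref{ineq2} by integrating over levels. By the layer-cake formula and the lemma applied with $A=\{f>t\}$, $\mu=\mu_f(t)$, $s=s(t)$, one gets \[\int_\Omega\sigma(f-f^{*\sigma})\,dx=\int_0^{\|f\|_{L^\infty}}\Big(\int_{\{f>t\}}\sigma-B_\sigma(\mu_f(t))\Big)dt\ \ge\ \int_0^{\|f\|_{L^\infty}}H_\sigma(\mu_f(t))\,s(t)^2\,dt.\] On the other hand the layer-cake identity for the $L^1$ norm gives $\|f-f^{*\sigma}\|_{L^1}=\int_0^\infty\mbox{meas}(\{f>t\}\triangle\{f^{*\sigma}>t\})\,dt=2\int_0^{\|f\|_{L^\infty}}s(t)\,dt$. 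Cauchy--Schwarz with the weight $H_\sigma(\mu_f(t))$ then yields $\big(\int_0^{\|f\|_{L^\infty}}s\,dt\big)^2\le\big(\int_0^{\|f\|_{L^\infty}}\tfrac{dt}{H_\sigma(\mu_f(t))}\big)\big(\int_0^{\|f\|_{L^\infty}}H_\sigma s^2\,dt\big)$, and multiplying by $4$ and inserting the definition \fref{Kq*} of $K(f^*,\sigma)$ together with the previous display gives precisely \fref{ineq2}. For the general inequality \fref{ineq1} one runs the same layer-cake comparison but between $\{f>t\}$ of measure $\mu_f(t)$ and $\{q^{*\sigma}>t\}=\{u<\mu_q(t)\}$ of measure $\mu_q(t)$; the two measures no longer coincide, and tracking the resulting mismatch level by level produces the mass defect $\|q\|_{L^1}-\|f\|_{L^1}$ and, after expressing the level-wise discrepancy through $\beta_{f^*,q^*}$ and $\beta_{q^*,f^*}$, the correction integrals involving $b_\sigma(2\mu_q(s))$ and $b_\sigma(\mu_q(s))$. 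This unequal-measure bookkeeping is the only genuinely new difficulty beyond the lemma above.
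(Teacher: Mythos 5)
Your treatment of part (i) and of the special case \fref{ineq2} is correct and follows essentially the paper's own mechanism: a layer-cake decomposition in the level variable $t$, a quantitative bathtub comparison at each level yielding the second difference $B_\sigma(\mu+s)+B_\sigma(\mu-s)-2B_\sigma(\mu)\geq H_\sigma(\mu)s^2$, and a weighted Cauchy--Schwarz inequality in $t$. (The paper proves convexity of $B_\sigma$ by a direct comparison of two rings of equal measure rather than via the representation $B_\sigma(\mu)=\int_0^\mu b_\sigma(m)\,dm$, but your route is legitimate under the standing finiteness assumption \fref{Btheta}, granted the pushforward property of $a_\sigma\circ\sigma$ and $\sigma=b_\sigma\circ a_\sigma\circ\sigma$ a.e., which is the content of the paper's Lemma \ref{lem_atheta} and Proposition \ref{f*theta-equi}.)

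The genuine gap is the general inequality \fref{ineq1}, which is the main assertion (every application uses it with $q\neq f$, since perturbations are not equimeasurable with the steady state), and which you dismiss as ``unequal-measure bookkeeping.'' Your key lemma compares a set $A$ with the sublevel set $A^{*\sigma}$ \emph{of the same measure} $\mu$; in the general case one must compare, at each level $t$, the set $\{f>t\}$ of measure $\mu_f(t)$ with $\{q^{*\sigma}>t\}=\{a_\sigma\circ\sigma<\mu_q(t)\}$ of measure $\mu_q(t)\neq\mu_f(t)$, so the lemma simply does not apply level by level. What replaces it in the paper is an asymmetric comparison: rearranging $D_1(t)=\{q^{*\sigma}\leq t<f\}$, of measure $\alpha(t)$, into the outer ring $\{\mu_q\leq a_\sigma\circ\sigma<\mu_q+\alpha\}$ and $D_2(t)=\{f\leq t<q^{*\sigma}\}$, of measure $\beta(t)$, into the inner ring, giving the lower bound $B_\sigma(\mu_q+\alpha)+B_\sigma(\mu_q-\beta)-2B_\sigma(\mu_q)$ in which $\alpha\neq\beta$, so it cannot be fed into the definition of $H_\sigma$. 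Two further ideas are then needed and are absent from your sketch: (a) symmetrize in $\beta$ only, writing the above as $\left[B_\sigma(\mu_q+\beta)+B_\sigma(\mu_q-\beta)-2B_\sigma(\mu_q)\right]+\left[B_\sigma(\mu_q+\alpha)-B_\sigma(\mu_q+\beta)\right]$, and apply Cauchy--Schwarz to the one-sided defect $\int_0^\infty\beta(t)\,dt$, which equals exactly $\frac12\left(\|f-q^{*\sigma}\|_{L^1}+\|q\|_{L^1}-\|f\|_{L^1}\right)$; this, and not the symmetric difference $2s(t)$ of your lemma, is what produces the left-hand side of \fref{ineq1}; (b) bound the leftover term from below by the tangent inequality $B_\sigma(\mu_1)-B_\sigma(\mu_2)\geq(\mu_1-\mu_2)b_\sigma(\mu_2)$ combined with the identity $\alpha(t)-\beta(t)=\mu_f(t)-\mu_q(t)=\beta_{q^*,f^*}(t)-\beta_{f^*,q^*}(t)$ (the pivot converting the level-wise mismatch into rearrangement data), the bound $\beta\leq\mu_q$, and the monotonicity of $b_\sigma$, which together yield the correction integral with precisely the factors $b_\sigma(\mu_q(s))$ and $b_\sigma(2\mu_q(s))$. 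Without steps (a) and (b) what you have actually proved is only \fref{ineq2}, which follows from \fref{ineq1} but does not imply it.
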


Before making comments on the motivations of such a result, we just give some remarks and state a new functional inequality as a direct consequence of Theorem \ref{thm1} (see Corollary \ref{cor1} below).

\begin{Rk} 
\label{Rk3} In cases where the constant \fref{Kq*} is not finite, we still have the following estimate which will be shown in  the proof of theorem \ref{thm1} (see section \ref{proofth1}).
For  any nonnegative functions $f, q \in L^1(\RR^d)$ we have the inequality

\be
 \begin{array}{ll}
 \label{ineq11}
\ds  \int_{\Omega}  \sigma(x) (f(x)-q^{*\sigma}(x))dx\  \geq & \ds \int \left[B_\sigma(\mu_q(s) +\beta_{f,q}(s))+ B_\sigma(\mu_q(s) -\beta_{f,q}(s))- 2 B_\sigma(\mu_q(s))\right] ds  \\ &\ds \hspace{0cm}+ \int_{0}^{+\infty}[\beta_{q^*,f^*}(s)) b_\sigma(\mu_q(s))-\beta_{f^*,q^*}(s)b_\sigma(2\mu_q(s))]ds
  \end{array}
\ee 
where $\beta_{f,g}(s)$ is defined by \fref{betafg}.
Estimate \fref{ineq1} will be deduced from \fref{ineq11} when \fref{Kq*} is satisfied.
\end{Rk}

\begin{Rk} 
\label{Rk4}
When $e_{min}$ is finite, the function $B_{\sigma}(\mu)$ given by \fref{Btheta} is indeed finite and it  has another expression (see Lemma \ref{lem_atheta}, section \ref{prop-atheta}) which will be more practical for applications:
$$B_\sigma(\mu)= \int_0^\mu b_\sigma(s) ds,$$
where $b_\sigma$ is the pseudo-inverse of $a_\sigma$ defined by \fref{btheta}. 

\end{Rk}


Note that the constant $K$ in \fref{ineq1} only depends on $q^*$  and $\sigma$ and not on $f^*$. As we will see later in our stability analyses, the function $q$ will play the role of a steady state, and the dependence of $K$ on $q^*$ and $\sigma$ only,  is  important to allow general perturbations of the steady state $q$.
Before going to these stability issues, we give  a direct consequence of this theorem which will be proved in section \ref{proof-cor1}.

\begin{Cor}
\label{cor1}
Let  $d\in \NN$, $d\geq 1$. For any nonnegative function $u\in L^1(\RR^d)\cap L^{\infty}(\RR^d)$, and for all $0\leq m\leq d$, there holds
\be
\label{bathtub-classic}
\int_{\RR^d} |x|^m(u(x)- u^{*}(x)) dx \geq \frac{m}{4d} K_d^{-m/d}\|u\|_{L^1}^{-1+m/d}\|u\|_{L^\infty}^{-m/d}\|u-u^*\|_{L^1}^2,
\ee
where $K_d$ is the volume of the unit ball in $\RR^d$.
\end{Cor}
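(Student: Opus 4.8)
The plan is to deduce Corollary \ref{cor1} directly from the refined inequality \fref{ineq2} of Theorem \ref{thm1}, applied with $q=u$ and $\sigma(x)=|x|^m$ on $\Omega=\RR^d$. For $m>0$ the map $|x|^m$ is a strictly increasing function of $|x|$, so I expect its $\sigma$-rearrangement to coincide with the Schwarz symmetrization, $u^{*\sigma}=u^*$; this will be manifest once $a_\sigma$ is computed, since $a_\sigma(|x|^m)=K_d|x|^d=\mbox{meas}(B_d(0,|x|)\cap\RR^d)$, so that \fref{def-q*theta} and \fref{def-q*} give the same function. The hypotheses of Definition \ref{def1} are easily checked: $e_{min}=0$, $e_{max}=+\infty$, the level sets $\{|x|^m=e\}$ are spheres of measure zero, and $a_\sigma(e)\to\mbox{meas}(\RR^d)$ as $e\to+\infty$. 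With these identifications \fref{ineq2} reads $\|u-u^*\|_{L^1}^2\leq K(u^*,\sigma)\int_{\RR^d}|x|^m(u-u^*)\,dx$, so the whole statement reduces to the upper bound $K(u^*,\sigma)\leq \frac{4d}{m}K_d^{m/d}\|u\|_{L^1}^{1-m/d}\|u\|_{L^\infty}^{m/d}$.

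Next I would compute the auxiliary functions explicitly. From \fref{athetaHL}, for $e>0$ one has $a_\sigma(e)=\mbox{meas}\{|x|<e^{1/m}\}=K_d\,e^{d/m}$, so the pseudo-inverse \fref{btheta} is $b_\sigma(\mu)=(\mu/K_d)^{m/d}$. Since $e_{min}=0$ is finite, Remark \ref{Rk4} applies and gives $B_\sigma(\mu)=\int_0^\mu b_\sigma(s)\,ds=K_d^{-m/d}\frac{d}{d+m}\mu^{(d+m)/d}$. Writing $p=1+\frac{m}{d}\in(1,2]$ this is the convex power $B_\sigma(\mu)=p^{-1}K_d^{-m/d}\mu^p$, with $B_\sigma''(\mu)=K_d^{-m/d}\frac{m}{d}\mu^{m/d-1}$.

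The heart of the argument, and the step I expect to be the main obstacle, is the lower bound $H_\sigma(\mu)\geq B_\sigma''(\mu)$, which is what produces the sharp constant $m/(4d)$. Starting from the representation $B_\sigma(\mu+s)+B_\sigma(\mu-s)-2B_\sigma(\mu)=\int_0^s\int_{-r}^{r}B_\sigma''(\mu+\rho)\,d\rho\,dr$, I would use that $B_\sigma''(\mu)=K_d^{-m/d}\frac{m}{d}\mu^{m/d-1}$ is itself convex (the exponent $m/d-1\in(-1,0]$ makes $\mu^{m/d-1}$ convex). By Jensen's inequality the average of a convex function over a symmetric interval dominates its value at the centre, so $\int_{-r}^{r}B_\sigma''(\mu+\rho)\,d\rho\geq 2r\,B_\sigma''(\mu)$; integrating in $r$ yields $B_\sigma(\mu+s)+B_\sigma(\mu-s)-2B_\sigma(\mu)\geq s^2B_\sigma''(\mu)$ for every $0<s\leq\mu$, and taking the infimum over $s$ gives $H_\sigma(\mu)\geq B_\sigma''(\mu)$.

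Finally, inserting this into \fref{Kq*} and using $\mu_u=\mu_{u^*}$ gives $K(u^*,\sigma)=4\int_0^{\|u\|_{L^\infty}}\frac{dt}{H_\sigma(\mu_u(t))}\leq \frac{4d}{m}K_d^{m/d}\int_0^{\|u\|_{L^\infty}}\mu_u(t)^{1-m/d}\,dt$. Since $1-\frac{m}{d}\in[0,1]$, I would close the estimate with Hölder's inequality on $[0,\|u\|_{L^\infty}]$ with the conjugate exponents $\frac{d}{d-m}$ and $\frac{d}{m}$, together with the layer-cake identity $\int_0^{\|u\|_{L^\infty}}\mu_u(t)\,dt=\|u\|_{L^1}$, to obtain $\int_0^{\|u\|_{L^\infty}}\mu_u(t)^{1-m/d}\,dt\leq \|u\|_{L^1}^{1-m/d}\|u\|_{L^\infty}^{m/d}$. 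This is exactly the required bound on $K(u^*,\sigma)$; substituting into \fref{ineq2} and rearranging produces \fref{bathtub-classic}. The endpoint $m=d$ is recovered by reading the Hölder step as the trivial bound $\int_0^{\|u\|_{L^\infty}}1\,dt=\|u\|_{L^\infty}$, while $m=0$ is the identity $\int(u-u^*)\,dx=0$, so both sides vanish.
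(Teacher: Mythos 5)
Your proposal is correct and follows essentially the same route as the paper: apply inequality \fref{ineq2} with $\sigma(x)=|x|^m$, compute $a_\sigma$, $b_\sigma$, $B_\sigma$ explicitly, obtain the lower bound $H_\sigma(\mu)\geq \frac{m}{d}K_d^{-m/d}\mu^{m/d-1}$, and then bound $\int_0^{\|u\|_{L^\infty}}\mu_u(t)^{1-m/d}\,dt$ by $\|u\|_{L^1}^{1-m/d}\|u\|_{L^\infty}^{m/d}$. The only (harmless) differences are that you justify the $H_\sigma$ bound via the second-derivative representation and convexity of $B_\sigma''$ where the paper invokes an ``elementary analysis'' of $s\mapsto(1+s)^{1+m/d}+(1-s)^{1+m/d}-2-\frac{m}{d}(1+\frac{m}{d})s^2$, and you use H\"older where the paper uses Jensen's inequality --- these are equivalent steps.
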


We now summarize heuristically  the general motivation of this work in very simple situations. It is well-known that rearrangement inequalities are very useful in many areas of mathematical analysis. In particular, rearrangements techniques are fundamental tools in the study of variational problems, 
and therefore in the stability analysis of a large class of models. One of these fundamental inequalities is the so-called Hardy-Littlewood inequality,  see \cite{HL1}, \cite{Lieb-Loss} \cite{Burch1}, which writes
\be
\label{HL-classique} \int_{\Omega} f(x) g(x) dx \leq  \int_{\Omega} f^*(x) g^*(x) dx,
\ee
where  $\Omega$ is a measurable subset of   $\RR^d$ and   where the rearrangements $f^*$ and $ g^*$ are defined on $\Omega$ by formula \fref{def-q*}. This inequality  implies the well-known particular form
\be
\label{HL-simple}
\int_{\Omega} |x| (f(x)- f^*(x)) dx \geq 0, 
\ee
which is commonly used in variational approaches to prove symmetry properties on the extremizers. A first consequence of our analysis  is that this inequality can be to extended to the generalized notion of rearrangements $f^{*\sigma}$ as follows:
\be
\label{HL-theta}
\int_{\Omega} \sigma(x) (f(x)- f^{*\sigma}(x)) dx \geq 0.
\ee
In fact, this extended inequality is in general not sufficient to provide quantitative stability results. Our main goal is to give a refinement of inequalities like \fref{HL-theta} in this general framework, to be used for {\em quantitative} stability issues. In others terms, we shall prove that quantities like the lhs of  \fref{HL-theta} 
controls the strong $L^1$ norm of $f-f^{*\sigma}$. More generally, we will derive from Theorem \ref{thm1} inequalities of the form
\be
\label{HL-theta-ref}
\int_{\Omega} \sigma(x) (f(x)-q^{*\sigma}(x)) dx + C_2(q^*, \sigma)\|f^*-q^*\|_{L^1} \geq C_1(q^*, \sigma)\left( \|f-q^{*\sigma} \|_{L^1} +\|q\|_{L^1}-\|f\|_{L^1} \right)^2,
\ee
which, in the particular case $f^*=q^*$, yields  a refinement of  the Hardy-Littelewood type inequality \fref{HL-theta}.  Now let us show heuristically why  an inequality like  \fref{HL-theta-ref}  directly provides  {\em quantitative  stability results} for appropriate evolution systems.
Assume  that $f=f(t,x)$ is a nonnegative solution to an evolution equation which preserves  the rearrangements in the variable $x$ and some energy functional (nonincreasing energy is sufficient in general), to wit
$$f(t)^*= f(0)^*, \qquad {\mathcal E}(f(t))=\int_{\Omega} \sigma(x) f(t,x) dx\leq {\mathcal E}(f(0)). $$ 
Of course the energy functionals we will have at hands are in general more complicated than this one, but we prefer to use this simplified energy to make clear this heuristical presentation. Consider a steady state $q$  which is
assumed to be a {\em fixed point} of the $\sigma$-rearrangement: $q^{*\sigma}=q$. Since the evolution problem preserves the rearrangement $f(t)^{*\sigma}$ and does not increase the energy,  we  have from \fref{HL-theta-ref}
\be
\label{ccxx}
\begin{array}{ll} \ds \int_{\Omega} \sigma(x) (f(0,x)(x)- q(x)) dx  &\geq  \ds \int_{\Omega} \sigma(x) (f(t,x)- q^{*\sigma}(x)) dx \\
&\hspace{-1cm}\ds \geq  C_1\left( \|f(t)-q^{*\sigma} \|_{L^1} +\|q\|_{L^1}-\|f(t)\|_{L^1} \right)^2- C_2 \|f(t)^*-q^*\|_{L^1}\\
&\hspace{-1cm} \ds = C_1\left( \|f(t)-q \|_{L^1} +\|q\|_{L^1}-\|f(0)\|_{L^1} \right)^2- C_2\|f(0)^*-q^*\|_{L^1}.
\end{array}\ee
The strong $L^1$ stability  is then deduced quantitatively from this inequality and  from the contractivity property of the rearrangements $\|f(0)^* - q^{*}\|_{L^1}\leq \|f(0) - q\|_{L^1}.$ 

Note that the stability in the energy space can also be derived.  In other terms, we can show from our inequality
\fref{HL-theta-ref} 
that the solution $f(t)$ remains also close  to $q$ in the energy norm, which means that the following quantity (in general $\sigma$ can be taken to be nonnegative)
$$\|f(t)-q \|_{L^1}+ \int_{\Omega} \sigma(x) |f(t,x)- q(x)| dx$$
remains small uniformly in time if it is small at time $0$.
Indeed, let us proceed by contradiction. Assume we have some $\e>0$, for which there exists a sequence $f_n(t)$ of solutions corresponding to initial data $f_n(0)$ satisfying
$$ \|f_n(0)-q \|_{L^1}+ \int_{\Omega} \sigma(x) |f_n(0)- q(x)| dx \to 0, \quad \mbox{as} \ n\to \infty,$$
and that there exist $t_n\geq 0$ such that $g_n=f_n(t_n)$ satisfies 
\be 
\label{ccxx1}\|g_n-q \|_{L^1}+ \int_{\Omega} \sigma(x) |g_n(x)- q(x)| dx \geq \e, \quad \forall n.
\ee 
From the $L^1$ stability shown above (see \fref{ccxx}), we have
$$ \sup_{t\geq 0} \|f_n(t)-q \|_{L^1} \to 0, \quad \mbox{as} \ n\to \infty,$$
which implies that $\|g_n-q \|_{L^1} \to 0$ and, up to a subsequence, $g_n$ converges to $q$ {\em a.e.} Therefore, applying the Br\'ezis-Lieb lemma  
\cite{Lieb-Loss} to the sequence $\sigma g_n$ we have
$$\|\sigma g_n- \sigma q \|_{L^1} + \|\sigma g_n \|_{L^1} - \|\sigma q \|_{L^1} \to 0.$$
Applying \fref{ccxx} to $f_n(t_n)=g_n$ we get $\|\sigma g_n \|_{L^1}\to  \|\sigma q \|_{L^1}$ from the assumption on  the initial perturbation in the energy space. Therefore
$$\|\sigma g_n- \sigma q \|_{L^1}  \to 0\quad  \mbox{and} \quad \|g_n- q \|_{L^1} \to 0.$$
This contradicts  \fref{ccxx1}
and the stability of $q$ in the energy space holds true.


\bigskip

We recall  that the constants  $ K$  in  \fref{ineq1} (or the constant $C(q^*, \sigma)$  in \fref{HL-theta-ref}) only depends on the rearrangement of the steady state $q$ and, as we shall see, this allows to avoid the usual assumptions made on the initial perturbations ($L^\infty$ bounded  or compactly supported  perturbations are usually considered), see for example the quantitative studies of the stability \cite{MP1,MP2} for 2D Euler and Valsov-Poisson systems. These two systems are indeed  well adapted to the present context  since they are known to preserve the rearrangements and some energy functionals (Hamiltonian, angular-momentum,etc).

We emphasize that, in the  the  particular case of the classical Schwarz rearrangements, our result can be interpreted as  a quantitative refinement of the well-known inequality \fref{HL-simple}, and in fact we have (see Corollary \ref{cor1} for a precise statement)
 \be
 \label{HL-simple-ref1}\int_{\Omega} |x| (f(x)- f^*(x)) dx \geq  C(\|f\|_{L^1}, \|f\|_{L^\infty}) \|f-f^*\|_{L^1}^2 .
 \ee
 A similar inequality  as \fref{HL-simple-ref1}, though in a particular context, has been proved by Pulvirenti and Marchioro \cite{MP1,MP2},  from which they directly derived the stability of a particular class of steady state solutions: particular steady state solutions  to  2D-Euler (planar couette flow and radial steady states), and spatially homogeneous steady states solutions to the Vlasov-Poisson system in a periodic domain in space. Our inequalities may be viewed as an extension of the inequalities in \cite{MP1,MP2} for generalized rearrangements and general energies $\sigma$.
 
\subsection{Application 1: a quantitative stability inequality for the  gravitational Vlasov-Poisson system}

Our approach will first  be applied to quantify  the non linear stability of a general class of steady state solutions to the gravitational Vlasov-Poisson (VP):
\be
\label{VP}
\begin{array}{l}
\ds \partial_t f + v\cdot \nabla_x f -  \nabla_x\phi_f \cdot \nabla_v f  =0,\\
\ds f(t=0,x,v)= f_{in}(x,v)\geq 0,
\end{array}
\ee
where $f=f(t,x,v)$ is the distribution function depending on time $t\geq 0$, on the position $x\in \RR^3$ and on the velocity $v\in \RR^3$. The quantities $\rho_f$ 
and $\phi_f$ are respectively the density and the Poisson field associated with $f$ according to
\be
\label{rho-phi}
\rho_f(x)= \int_{\RR^3} f(x,v) dv , \qquad  \phi_f(x) = -\frac{1}{4\pi |x|}*\rho_f.
\ee
The global Cauchy problem has been investigated in \cite{LP,Pf,S1} where unique global classical solutions ($C^1$ compactly supported) are derived. Global weak solutions have been also constructed in \cite{Ars,HorstHunze,Ill-Neun} and,  in particular, for all initial data $f_{in}$ in the energy space
\be
\label{E0}
{\mathcal E}_0 =\{ f\geq 0\quad \mbox{with} \quad f \in L^1\cap L^\infty (\RR^6)\quad  \mbox{and} \quad|v|^2 f \in L^1(\RR^6)\}
\ee
there exists a weak solution to \fref{VP}, which is also a renormalized solution, see \cite{dpl1,dpl2}.

 It is well known that these solutions preserve the equimeasurability in the following sense. If $f(t)$ is a solution at time $t$ to the VP system with initial data $f_{in}$, then one has
$$\mu_{f(t)}(\lambda) = \mu_{f_{in}}(\lambda), \qquad \forall \lambda \geq 0,$$
where $\mu_f(\lambda)$  is defined by
$$\mu_f(\lambda)=\mbox{meas}\{(x,v)\in \RR^6; f(x,v)>\lambda\}.$$
It is also well known that the classical solutions to VP preserve the  following energy functional (Hamiltonian)
\be
\label{Hamiltonien-sigma}
{\mathcal  H}(f)=\int \frac{|v|^2}{2} f dxdv  -  \frac{1}{2}  \int |\nabla_x \phi_{f}|^2dx.
\ee
which means
$${\mathcal  H}(f(t))={\mathcal  H}(f_{in}), \qquad \forall t\geq 0.$$
This last conservation property is replaced by an inequality in the case of weak solutions:  ${\mathcal  H}(f(t))\leq {\mathcal  H}(f_{in})$.

Finally consider the  classical family of compactly supported steady state solutions to (VP) of the form 
\be
\label{steady-state-VP} f_0(x,v)= F(e(x,v)),  \qquad e(x,v)= \frac{|v|^2}{2} + \phi_{f_0}(x),
\ee
where $F$ is a continuous function from $\RR$ to $\RR^+$ and  ${\mathcal C}^1$ strictly decreasing on the support of $Q$. More precisely, we assume that there exists $e_0<0$ such that
$F(e)=0$ for $e\geq e_0$ and $F$ is a ${\mathcal C}^1$ function on $(-\infty, e_0)$ with $F'<0$ on $(-\infty, e_0)$.
Let ${\mathcal E}$ be the following energy space:
\be
\label{E}
{\mathcal E} = \{ f=f(x,v) \geq 0,  \quad \|f\|_{\mathcal E}= \|(1+|v|^2) f\|_{L^1} + \|\nabla_x\phi_f\|_{L^2}^2 <\infty\},
\ee
which contains the above space ${\mathcal E}_0$ given by \fref{E0}, since one has the classical interpolation inequality
\be  
\label{interpo-VP}\|\nabla_x\phi_f\|_{L^2} \leq C \||v|^2 f\|_{L^1} ^{1/2} \|f\|_{L^1} ^{7/6}\|f\|_{L^\infty}^{1/3},
\ee
where $C$ is a universal constant.
Finally, for $f\in {\mathcal E}$, we denote by $f^*$ the standard Schwarz rearrangement of $f$ in the phase
space $(x, v) \in \RR^6$. 
%
%

With these notations, we now state  our {\em quantitative stability inequalities} for the gravitational Vlasov-Poisson system. The proof of the following theorem is given in section  \ref{sectionVPP}.

\begin{Thm}[Control of $f$ by the Hamiltonian and its Schwarz rearrangement]
\label{thm-VPG}
 Let $f_0$ be given by \fref{steady-state-VP} with $F$ satisfying the above assumptions.  Then we have the following.
 \begin{itemize}
 \item[i)] {\em Global control}. There exist a constant $K_0 > 0$ depending only
on $f_0$ such that or all $f\in {\mathcal E}$
\be
\label{ineq-grav-glob}
\begin{array}{ll}
 \ds \|f-f_0\|_{L^1}  \leq & \|f^* - f_0^*\|_{L^1}+ \\ & \ds K_0\left[ {\mathcal H}(f) - {\mathcal H}(f_0)  + 2|\phi_{f_0}(0)| \|f^* - f_0^*\|_{L^1} + \|\nabla \phi_{f}-\nabla {\phi_{f_0}}\|_{L^2}^2\right]^{1/2}.
 \end{array}
\ee
\item[ii)] {\em Local control}. There exist constants $K_0,  R_0 > 0$  depending only
on $f_0$ and a continuous map $\phi \mapsto z_\phi$ from ($\dot H^1, \|\cdot\|_{\dot H^1})$ to $\RR^3$ such that, for all $f\in {\mathcal E}_0$ satisfying
\be
\label{local-assumption}\inf_{z\in \RR^3}\left ( \|\phi_{f}-{\phi_{f_0}}(\cdot-z)\|_{L^\infty} + \|\nabla \phi_{f}-\nabla {\phi_{f_0}}(\cdot-z)\|_{L^2}\right)<R_0,
\ee
  there holds:
\be
\label{ineq-grav-loc}
\begin{array}{l}
\ds \|f-f_0(\cdot-z_{\phi_f})\|_{L^1}+ \|\nabla \phi_{f}-\nabla {\phi_{f_0}}(\cdot-z_{\phi_f})\|_{L^2}^2 \leq \ds  \|f^* - f_0^*\|_{L^1}+ \\ \hspace{5.5cm} \ds K_0
\left[ {\mathcal H}(f) - {\mathcal H}(f_0) + K_0\|f^* - f_0^*\|_{L^1} \right]^{1/2}.
\end{array}
\ee
where we denoted $\phi_{0}(\cdot-z_{\phi_f})(x)= \phi_{f_0}(x-z_{\phi_f})$ and $f_0(\cdot-z_{\phi_f})(x,v)=
f_0(x-z_{\phi_f},v).$
\end{itemize}
\end{Thm}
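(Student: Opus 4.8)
The plan is to apply Theorem \ref{thm1} with the choice $\sigma(x,v)=e(x,v)=\frac{|v|^2}{2}+\phi_{f_0}(x)$ on $\Omega=\RR^6$, using $q=f_0$ as the reference state. First I would verify that this $\sigma$ satisfies the hypotheses of Definition \ref{def1}: measurability is clear; the level sets $\{e=c\}$ are, for each fixed $x$, spheres in $v$, so Fubini gives the zero-measure condition \fref{levelset0}; and a direct computation of $a_\sigma(e)=\mbox{meas}\{e(\cdot)<e\}$, using $\phi_{f_0}(x)\to 0$ as $|x|\to\infty$, shows that $e_{max}=0$ while $e_{min}=\mbox{ess\,inf}\,e=\min\phi_{f_0}=\phi_{f_0}(0)>-\infty$. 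Since $f_0=F(e)$ with $F$ strictly decreasing, $f_0$ is a non-increasing function of $\sigma$ and is trivially equimeasurable with itself; the uniqueness part of Proposition \ref{f*theta-equi} then gives $f_0^{*\sigma}=f_0$, so $f_0$ is the required fixed point and Theorem \ref{thm1} applies with $q=f_0$.

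The bridge to the Hamiltonian is an algebraic identity for the linear energy appearing on the right of \fref{ineq1}. Integrating by parts with $\Delta\phi_f=\rho_f$ and polarizing $\int|\nabla\phi_f|^2$ around $\phi_{f_0}$ yields
\[
\int_{\RR^6}\sigma(x,v)(f-f_0)\,dx\,dv=\mathcal{H}(f)-\mathcal{H}(f_0)+\tfrac12\|\nabla\phi_f-\nabla\phi_{f_0}\|_{L^2}^2 .
\]
This is the crucial step: the quantity $\int\sigma(f-f_0)$ that the rearrangement inequality controls equals the relative Hamiltonian up to the nonnegative field term, which in part i) will simply be kept on the right. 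For i), I would insert this identity into \fref{ineq1} and estimate the residual $\beta$-correction. Because $b_\sigma$ takes values in $[e_{min},e_{max}]=[\phi_{f_0}(0),0]$, one has $|b_\sigma|\le|\phi_{f_0}(0)|$, and together with the layer-cake identity $\int_0^\infty(\beta_{f^*,f_0^*}(s)+\beta_{f_0^*,f^*}(s))\,ds=\|f^*-f_0^*\|_{L^1}$ this bounds the correction by a multiple of $|\phi_{f_0}(0)|\,\|f^*-f_0^*\|_{L^1}$. Hence the bracket in \fref{ineq1} is at most $\mathcal{H}(f)-\mathcal{H}(f_0)+2|\phi_{f_0}(0)|\|f^*-f_0^*\|_{L^1}+\|\nabla\phi_f-\nabla\phi_{f_0}\|_{L^2}^2$. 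Finally, equimeasurability gives $\|f\|_{L^1}-\|f_0\|_{L^1}=\|f^*\|_{L^1}-\|f_0^*\|_{L^1}\le\|f^*-f_0^*\|_{L^1}$, which lets me pass from the quantity $\|f-f_0\|_{L^1}+\|f_0\|_{L^1}-\|f\|_{L^1}$ on the left of \fref{ineq1} to $\|f-f_0\|_{L^1}$ at the cost of the additive term $\|f^*-f_0^*\|_{L^1}$, producing exactly \fref{ineq-grav-glob} with $K_0=\sqrt{K(f_0^*,\sigma)}$.

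It remains to check $K(f_0^*,\sigma)<\infty$: by Remark \ref{Rk4}, $B_\sigma(\mu)=\int_0^\mu b_\sigma$, so $H_\sigma$ is comparable to the modulus of convexity of $B_\sigma$, i.e. to $b_\sigma'$; the relation $b_\sigma(\mu_{f_0}(t))=F^{-1}(t)$ together with $F\in\mathcal{C}^1$, $F'<0$ ensures the density of states is nondegenerate on the compact support of $f_0$ and that the integral \fref{Kq*} converges. This is a technical but not conceptually hard point. For the local control ii) I would run the same scheme with $f_0$ replaced by its translate $f_0(\cdot-z)$, which is again a fixed point of the rearrangement associated with $\frac{|v|^2}{2}+\phi_{f_0}(\cdot-z)$ and satisfies $\mathcal{H}(f_0(\cdot-z))=\mathcal{H}(f_0)$ by translation invariance, so the identity above reads with $\|\nabla\phi_f-\nabla\phi_{f_0}(\cdot-z)\|_{L^2}^2$. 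The point of ii) is that this quadratic field term must now appear on the \emph{left}, controlled by the relative Hamiltonian, rather than being discarded on the right. I expect the main obstacle to lie exactly here: one must establish a coercivity estimate $\mathcal{H}(f)-\mathcal{H}(f_0)+C\|f^*-f_0^*\|_{L^1}\gtrsim\|\nabla\phi_f-\nabla\phi_{f_0}(\cdot-z_{\phi_f})\|_{L^2}^2$ for $f$ satisfying \fref{local-assumption}. I would define $z_{\phi_f}$ by a modulation/implicit-function argument, fixing it through orthogonality of $\nabla\phi_f-\nabla\phi_{f_0}(\cdot-z)$ to the translation generators $\partial_{x_i}\phi_{f_0}(\cdot-z)$, with \fref{local-assumption} guaranteeing existence, uniqueness and continuity of $z_{\phi_f}$ on $\dot H^1$. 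The coercivity itself rests on the positivity of the second variation of $\mathcal{H}$ under equimeasurable perturbations modulo translations (the spectral gap of the linearized operator about $f_0$, analyzed in \cite{LMR}), which here must be exploited in a quantitative, compactness-free form; once it is available, absorbing $\tfrac12\|\nabla\phi_f-\nabla\phi_{f_0}(\cdot-z_{\phi_f})\|_{L^2}^2$ into the relative Hamiltonian and combining with the inequality applied to $f_0(\cdot-z_{\phi_f})$ yields \fref{ineq-grav-loc}.
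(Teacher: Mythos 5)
Your part i) is essentially the paper's own proof: you apply Theorem \ref{thm1} with $\sigma$ equal to the microscopic energy of $f_0$ (the paper uses the shifted version $e_0=|v|^2/2+\phi_{f_0}-\phi_{f_0}(0)$ so that $e_{min}=0$, but this is cosmetic), take $q=f_0$ so that $f_0^{*\sigma}=f_0$, bound the $\beta$-correction by $|b_\sigma|\le|\phi_{f_0}(0)|$ times $\|f^*-f_0^*\|_{L^1}$, insert the polarization identity ${\mathcal H}(f)-{\mathcal H}(f_0)=\int\sigma(f-f_0)-\frac12\|\nabla\phi_f-\nabla\phi_{f_0}\|_{L^2}^2$, and use equimeasurability to replace $\|f_0\|_{L^1}-\|f\|_{L^1}$ by $-\|f^*-f_0^*\|_{L^1}$. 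Your finiteness argument for $K(f_0^*,\sigma)$ is vaguer than the paper's (which uses convexity of $a_{e_0}$ and a Taylor expansion to get the uniform bound $H_{e_0}(\mu_{f_0}(t))\ge \bigl(2\,a_{e_0}'\circ a_{e_0}^{-1}(\mbox{meas}(\mbox{Supp}(f_0)))\bigr)^{-1}$), but it is the same idea and fixable.

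Part ii), however, has a genuine gap: you defer the entire content of the statement to a coercivity estimate that you assume (``once it is available''), and the route you sketch for it is not the one that works. The paper does \emph{not} run a modulation/second-variation argument on ${\mathcal H}$ in the variable $f$; such an expansion is problematic because $f-f_0$ is only small in $L^1$-type norms, the equimeasurability constraint is nonlinear and non-convex, and your $f$ is not even equimeasurable to $f_0$. The paper's mechanism is different and specific: it rearranges $f$ with respect to the microscopic energy of \emph{its own} potential, $e_{\phi_f}=|v|^2/2+\phi_f+\|\phi_f\|_{L^\infty}$ (not the energy of $f_0$), uses the generalized Hardy--Littlewood inequality of Theorem \ref{thm1} to discard $\int e_{\phi_f}(f-f^{*e_{\phi_f}})\ge 0$, and bounds the cross term explicitly by
$$\Bigl|\int (e_{\phi_f}-\|\phi_f\|_{L^\infty})(f^{*e_{\phi_f}}-f_0^{*e_{\phi_f}})\,dx\,dv\Bigr|\le \|\phi_f\|_{L^\infty}\,\|f^*-f_0^*\|_{L^1},$$
via the representation $\int e_{\phi_f}f^{*e_{\phi_f}}=\int_0^{+\infty}f^\sharp(s)\,a_{e_{\phi_f}}^{-1}(s)\,ds$. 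This reduces everything to the functional ${\mathcal J}(\phi)=\int e_\phi\, f_0^{*e_\phi}\,dx\,dv+\frac12\|\nabla\phi\|_{L^2}^2$, which depends on the \emph{potential alone}; the needed coercivity of ${\mathcal J}$ near $\phi_{f_0}$ modulo translation (including the map $\phi\mapsto z_\phi$) is then exactly Proposition 3.1 of \cite{LMR}, which is already quantitative and compactness-free. In other words, the missing idea in your proposal is this reduction to a potential-only functional through the $e_{\phi_f}$-rearrangement: it is what transfers the problem from the intractable constrained second variation in $f$ to a genuine Taylor expansion in $\dot H^1$, and it is also where the $\|f^*-f_0^*\|_{L^1}$ term in the coercivity inequality actually comes from, a point your sketch leaves unexplained.
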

\begin{proof}[{ Proof of the nonlinear stability of $f_0$ using Theorem \ref{thm-VPG}}] Inequality \fref{ineq-grav-loc} directly implies the nonlinear stability (in the energy norm) of any steady state of the Vlasov-Poisson system having the form \fref{steady-state-VP} with  the above suitable assumptions on $F$. The stability proof will not use any  compactness argument and this provides a completely quantitative statement of  the stability result of \cite{LMR}.   To get this quantitative result, inequality  \fref{ineq-grav-loc} will be just combined with the fact that the Vlasov-Poisson system preserves the rearrangement and does not increase the Hamiltonian  of the distribution function. Indeed, let $M>0$ be fixed (but arbitrary) and choose an initial data $f(0)\in {\mathcal E}_0$ such that $\|f(0)\|_{L^\infty}+\|f(0)\|_{\mathcal E}\leq M + \|f_0\|_{L^1} $.  A solution $f(t)$ to \fref{VP} (in a weak or classical sense) satisfies  ${\mathcal H}(f(t)) \leq {\mathcal H}(f(0))$.  Combining this with inequality \fref{interpo-VP} ensures that $\|f(t)\|_{{\mathcal E}}\leq C(M)$, for all $t\geq 0$, where the constant $C(M)$ depends only on $M$. Now, by interpolation inequalities we have
\be
\label{control11}\| \phi_{f(t)}-\phi_{f_0}\|_{L^\infty} + \|\nabla \phi_{f(t)}-\nabla {\phi_{f_0}}\|_{L^2}\leq C  \|f(t)-f_0\|_{L^1},
\ee
(see \cite{LMR}, page 189, for a proof of this inequality), where again the constant $C$ depends only on $M$.
This inequality still holds for any translated $f_0$ in space, in particular
\be
\label{control1}
\inf_{z\in \RR^3} \left(\| \phi_{f(t)}-\phi_{f_0}(\cdot -z)\|_{L^\infty} + \|\nabla \phi_{f(t)}-\nabla {\phi_{f_0}}
(\cdot -z)\|_{L^2}\right) \leq C  \inf_{z\in \RR^3} \left(\|f(t)-f_0(\cdot -z)\|_{L^1}\right),
\ee
 We then take initial data $f(0)$ such that 
\be
\label{ineqq1}\|f(0) - f_0\|_{L^1}+  K_0
\left[ {\mathcal H}(f(0)) - {\mathcal H}(f_0) + C_0\|f(0) - f_0\|_{L^1} \right]^{1/2}< R_0/2C.
\ee
From the contractivity property of the rearrangements, we also have
$$\|f(0)^* - f_0^*\|_{L^1}+  K_0
\left[ {\mathcal H}(f(0)) - {\mathcal H}(f_0) + C_0\|f(0)^* - f_0^*\|_{L^1} \right]^{1/2}< R_0/2C,$$
which implies from the conservation of the rearrangement and the decrease of the Hamiltonian, that  for all $t\geq 0$
\be
\label{ineqq2}
\|f(t)^* - f_0^*\|_{L^1}+  K_0
\left[ {\mathcal H}(f(t)) - {\mathcal H}(f_0) + C_0\|f(t)^* - f_0^*\|_{L^1} \right]^{1/2}< R_0/2C.
\ee
We now claim that, for such initial data, we have
\be
\label{claim-control}
\inf_{z\in \RR^3} \left(\|f(t) - f_0(\cdot -z)\|_{L^1}\right)< R_0/2C, \quad \forall t\geq 0.
\ee
Indeed, assume the contrary. Then, from \fref{ineqq1} and the continuity in time of the Vlasov-Poisson flow, there exists $t_0\geq 0$ such that
$\inf_{z\in \RR^3} \left(|f(t_0) - f_0(\cdot -z)\|_{L^1}\right)= 2R_0/3C < R_0/C.$ This implies from \fref{control1} that 
\fref{local-assumption} holds for $f(t_0)$
$$\inf_{z\in \RR^3} \left(\| \phi_{f(t_0)}-\phi_{f_0}(\cdot -z)\|_{L^\infty} + \|\nabla \phi_{f(t_0)}-\nabla {\phi_{f_0}}
(\cdot -z)\|_{L^2}\right) < R_0.$$
Therefore, the control \fref{ineq-grav-loc} holds for $f(t_0)$. Combining this with \fref{ineqq2} yields
$$2R_0/3C=\|f(t_0) - f_0\|_{L^1}<R_0/2C,$$
which is a contradiction. We conclude that \fref{claim-control} holds true and then from \fref{control1}
$$\inf_{z\in \RR^3} \left(\| \phi_{f(t)}-\phi_{f_0}(\cdot -z)\|_{L^\infty} + \|\nabla \phi_{f(t)}-\nabla {\phi_{f_0}}
(\cdot -z)\|_{L^2}\right)< R_0,   \quad \forall t\geq 0.$$
This ensures that inequality \fref{ineq-grav-loc} is satisfied by the solution $f(t)$ for all $t\geq 0$, and the nonlinear stability is an immediate consequence of this control.
\end{proof}

\subsection{Application 2: quantitative stability inequalities for the  2D Euler system}
We consider the incompressible  2D Euler system on a domain $\Omega$ of $\RR^2$  written in  the so-called vorticity formulation
\be\label{euler-vorticity}
\left\{\begin{array}{l}\pa_t\omega+\nabla^{\perp}\psi\cdot\nabla \omega=0, \quad \mbox{on} \ \RR_+\times \Omega,\\
\ds - \Delta \psi(x) = \omega(x), \quad \mbox{for} \  x\in \Omega,\\
\omega(0,x)=\omega_{in}(x),  \quad \mbox{on} \  \Omega.
\end{array}
\right.
\ee 
This formulation needs  suitable boundary conditions to be well-posed. We shall consider this equation in the following cases:
\begin{itemize}  
\item Radially symmteric domain
$\Omega =D(0,R)$   where $D(0,R)$ is the open disc of $\RR^2$ centered at $0$ with  radius $R>0$. 
In this case equation \fref{euler-vorticity} is usually complemented by  a Dirichlet boundary condition on $\Omega$:
\be 
\label{bc-Omega} 
- \Delta \psi = \omega \quad \mbox{on} \quad  \Omega, \qquad \psi(x)= 0 \quad \mbox{on} \quad \pa \Omega.
\ee
If $R=+\infty$ then $\psi$ is given by the formula
\be
\label{psi-R2}
\psi(x) = \frac{1}{2\pi} \int_{\RR^2} \ln(|x-y|)\omega (y) dy, \quad \mbox{for} \  x\in \RR^2.
\ee
We assume that $\omega_{in}\in L^1\cap L^\infty$ which ensures from the Yudovich theorem  \cite{Yud} the existence and uniqueness  of weak solutions $u\in L^\infty(\RR_+, L^1\cap L^\infty)$ to \fref{euler-vorticity}-\fref{bc-Omega}. We also assume that the initial vorticity $\omega_{in}$ is nonnegative, which implies that the vorticity
$\omega(t,\cdot)$ is nonnegative for all time.  Note that elliptic regularity implies  that $\psi$ is a $C^1$ function of $x\in\overline \Omega$.
It is well known that any solution to 
\be
\label{steady-Fphi}
\omega_0(x) = F(\psi_0(x)); \qquad -\Delta \psi_0 = \omega_0,\ee
where $F$ is a given smooth  and nonnegative function on $\RR$, is a steady state solution to \fref{euler-vorticity}.
 Since $\Omega$ has a radial symmetry, the Gidas-Ni-Nirenberg theorem ensures that all positive steady states  $\omega_0$
given by \fref{steady-Fphi}  have radial symmetry:
\be
\label{steady-radial}\omega_0(x)= G(|x|), \qquad \forall x \in \Omega.
\ee
To get  a stability inequality for these steady states, we will use the following momentum functional
\be
\label{energy-radial-euler} 
{\mathcal A}(\omega)= \int_{\Omega} |x|^2 \omega(x) dx,
\ee 
which is preserved by the flow  \fref{euler-vorticity} within the spherical symmetry context.

\item Rectangular domain  $\Omega = ]0,L_1[\times ]0,L_2[$ where $L_1, L_2>0,$  and  where  $L_2$ may be infinite. Consider the vorticity equation
\fref{euler-vorticity} with a periodic boundary condition in the $x_1$ direction and a Dirichlet boundary condition in the $x_2$ direction:
$$\begin{array}{l}\ds \Delta \psi (x) = - \omega(x), \quad \forall x\in]0,L_1[\times ]0,L_2[,\\
\ds  \psi(x_1,0)=0, \quad \psi(x_1,L_2)=0, \quad \forall x_1\in [0,L_1]\\
\ds  \quad  \psi(0,x_2)=  \psi(L_1,x_2),  \quad \forall x_2 \in  [0,L_2].
\end{array}$$
In case where $L_2=+\infty$ the boundary condition at $x_2=L_2$ has to be replaced by a vanishing condition at infinity: $\lim_{x_2\to +\infty} \psi(x_1,x_2)=0,  \quad \forall x_1\in [0,L_1].$

Let $\omega_0$ be the following steady state
\be
\label{ss-x2}
\omega_0(x) = F(x_2),  \quad x\in \Omega,
\ee
where $F$ is a monotonic  function such that $\omega_0 \in L^1\cap L^\infty$. We only consider the case where  $F$ is decreasing, since the  extension to increasing functions $F$ (in the case where $\Omega$ is bounded, $L_2<+\infty$) is straightforward. 

To get  a stability inequality for these steady states, we will use the following momentum functional
\be
\label{energy-rect-euler} 
{\mathcal B}(\omega)= \int_{\Omega} x_2 \omega(x) dx,
\ee 
which is preserved by the flow  \fref{euler-vorticity} within the rectangular symmetry case.

\item General bounded domains: let $\Omega$ be a bounded domain of class ${\mathcal C}^{2,\alpha}$ for some $\alpha \in (0,1)$.  In this case, we shall take a Dirichlet boundary condition for the stream function $\psi$, that is: $\psi (x)=0$ on the boundary  $\pa\Omega$, and consider the following family of  steady state solutions
\be
\label{SS-phi-euler} \omega_0 = F(\psi_0)\quad \mbox{with} \quad - \Delta \psi_0=\omega_0 , \quad \mbox{and}\quad  \psi_0=0\quad  \mbox{on} \quad \pa\Omega,
\ee
where $F$ is nonnegative, continuous and non increasing function on $\RR$ with $F(0)>0$. 
To get  a stability inequality for these steady states, we will use the following kinetic energy  functional
\be
\label{energy-Ham-euler} 
{\mathcal H}(\omega)= \frac{1}{2}\int_{\Omega}  \psi(x) \omega(x) dx = \frac{1}{2}\int_{\Omega}  |\nabla \psi(x)|^2 dx \ee 
which is preserved by the flow  \fref{euler-vorticity}.

\end{itemize}

The $L^1$ stability of radial steady  states \fref{steady-radial}  has been proved  in \cite{MP1} in the case of a bounded 
domain and $L^\infty$ perturbations. As pointed by the authors in  \cite{MP1}, the case where the domain is not bounded cannot be covered by their technique  unless one restricts to compactly supported perturbations. The $L^1$ stability of steady states of the form  \fref{ss-x2}  has also been proved  in  \cite{MP1}  with the same restrictions on the domain and on the perturbations.
In the following statement, we show that the functional inequality \fref{ineq1} stated in Theorem \ref{thm1} provides a quantitative estimate of the perturbation for all time in terms of the initial perturbation. Note that the domain $\Omega$ may be unbounded and the perturbations are not supposed to be in $L^\infty$.

\begin{Thm}
\label{thm-Euler1}[Stability inequalities for 2D Euler with symmetries]
Assume that   $\omega(t) \in L^1(\Omega)$ is a global weak solution at time $t$ to the 2D Euler equation in the vorticity form \fref{euler-vorticity}, with a nonnegative  initial data $\omega_{in}\in L^1(\Omega)$. 
\begin{itemize}
\item Radial steady states. Assume that $\Omega$ is a radially symmetric domain $B(0,R)$ where $R$ may be infinite. Let $q\in L^1\cap L^\infty(\Omega)$ be a steady state of the form \fref{steady-radial} where $G$ is a nonnegative decreasing function such that $\int_\Omega |x|^2G(|x|)dx<+\infty$.  
Then, for all $ t\geq 0$, we have the following quantitative stability inequality:   
\be
\label{ineq-E2D-radial}
\begin{array}{ll}
\ds \left( \|\omega(t)-q\|_{L^1(\Omega)}+\|q\|_{L^1}-\|\omega_{in}\|_{L^1} \right)^2 \leq &\ds  4\pi{\|q\|_{L^\infty}} \left[ \int_{\Omega} |x|^2(\omega_{in}(x)-q (x)) dx\right. \\ 
\ds &\ds \hspace{1.5cm} \left.  +\frac{2}{\pi} \int_0^{+\infty} \mu_q(s) \beta_{\omega_{in}^*,q^*}(s) ds\right].
\end{array}
\ee
where $\mu_q$  and $ \beta_{\omega_{in}^*,q^*}$ are defined by \fref{mu} and \fref{betafg}.
 In particular, if $q$ is compactly supported then
\be
\label{ineq-E2D-radial-c}
\begin{array}{ll}
\ds \left( \|\omega(t)-q\|_{L^1(\Omega)}+\|q\|_{L^1}-\|\omega_{in}\|_{L^1} \right)^2 \leq &\ds  4\pi{\|q\|_{L^\infty}} \left[ \int_{\Omega} |x|^2(\omega_{in}(x)-q (x)) dx\right. \\ 
\ds &\ds \hspace{1.5cm} \left.  +\frac{2}{\pi} \mbox{meas}(\mbox{Supp}(q))\|\omega_{in}^*-q^*\|_{L^1(\Omega)}\right].
\end{array}
\ee
\item Steady states on a rectangular domain. Assume that $\Omega$ is a rectangular domain $\Omega= ]0,L_1[\times ]0,L_2[$ where $L_1, L_2>0,$  and  where $L_2$ may be infinite.  Let $q\in L^1\cap L^\infty(\Omega)$ a steady state solution to the 2D-Euler system \fref{euler-vorticity} of the form \fref{ss-x2} where $F$ is a nonnegative decreasing function such that $\int_0^{L_2} x_2F(x_2) dx_2<+\infty$.  Then for all $ t\geq 0$, we have
\be
\label{ineq-E2D-rectangular}
\begin{array}{ll}
\ds\left( \|\omega(t)-q\|_{L^1}  + \|q\|_{L^1}-\|\omega_{in}\|_{L^1} \right)^2\leq  &\ds 4{\|q\|_{L^\infty}}\left[ \int_{\Omega} x_2 (\omega_{in}(x)- q (x)) dx\right. \\ 
&\ds \hspace{1.3cm} \left.  +\frac{2}{L_1} \int_0^{+\infty} \mu_q(s) \beta_{\omega_{in}^*,q^*}(s) ds \right].
\end{array}
\ee
In particular, if $q$ is compactly supported then
\be
\label{ineq-E2D-rectangular-c}
\begin{array}{ll}
\ds\left( \|\omega(t)-q\|_{L^1}  + \|q\|_{L^1}-\|\omega_{in}\|_{L^1} \right)^2\leq  &\ds 4{\|q\|_{L^\infty}}\left[ \int_{\Omega} x_2 (\omega_{in}(x)- q (x)) dx\right. \\ 
&\ds \hspace{1.3cm} \left.  +\frac{2}{L_1} \mbox{meas}(\mbox{supp}(q))\|\omega_{in}^*-q^*\|_{L^1}\right].
\end{array}
\ee
\end{itemize}

\end{Thm}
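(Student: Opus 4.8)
The strategy is to specialize the refined Hardy--Littlewood inequality \fref{ineq1} of Theorem \ref{thm1} to the conserved momentum of each symmetry class, taking $\sigma(x)=|x|^2$ in the radial case and $\sigma(x)=x_2$ in the rectangular case; these are precisely the weights appearing in the conserved functionals $\mathcal A$ of \fref{energy-radial-euler} and $\mathcal B$ of \fref{energy-rect-euler}. The first thing I would check is that the steady state $q$ is a fixed point of the corresponding $\sigma$-rearrangement, $q^{*\sigma}=q$. In the radial case $q=G(|x|)=G(\sqrt{\sigma})$ with $G$ decreasing, so $q$ is a non-increasing function of $\sigma$; being trivially equimeasurable with itself, the uniqueness part of Proposition \ref{f*theta-equi} forces $q^{*\sigma}=q$. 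The same holds in the rectangular case since $q=F(x_2)=F(\sigma)$ with $F$ decreasing. This reduces \fref{ineq1} to an inequality comparing $\omega(t)$ with $q$.

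The heart of the matter is the explicit evaluation of the constant $K(q^*,\sigma)$ and of $b_\sigma$, which is clean precisely because $\sigma$ is chosen so that its distribution function $a_\sigma$ is affine. On the disc $B(0,R)\subset\RR^2$ with $\sigma=|x|^2$ one has $a_\sigma(e)=\pi e$ for $0\le e<R^2$, hence $b_\sigma(\mu)=\mu/\pi$ and, by Remark \ref{Rk4} (valid since $e_{min}=0$ is finite), $B_\sigma(\mu)=\int_0^\mu b_\sigma(s)\,ds=\mu^2/(2\pi)$. Being exactly quadratic, $B_\sigma$ has constant second difference $B_\sigma(\mu+s)+B_\sigma(\mu-s)-2B_\sigma(\mu)=s^2/\pi$, so $H_\sigma\equiv 1/\pi$ on the range of $\mu_q$ and
$$K(q^*,\sigma)=4\int_0^{\|q\|_{L^\infty}}\frac{dt}{H_\sigma(\mu_q(t))}=4\pi\|q\|_{L^\infty},$$
which is finite because $q\in L^\infty$, so the hypothesis \fref{Kq*} is automatic. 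The rectangular case is identical with $a_\sigma(e)=L_1e$, $b_\sigma(\mu)=\mu/L_1$, $B_\sigma(\mu)=\mu^2/(2L_1)$ and $H_\sigma\equiv 1/L_1$, leading to the analogous constant.

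With these ingredients I would apply \fref{ineq1} to $f=\omega(t)$ at each fixed $t$ and simplify using the conservation laws of the flow. Since the Euler flow is measure preserving, $\omega(t)^*=\omega_{in}^*$ (whence also $\|\omega(t)\|_{L^1}=\|\omega_{in}\|_{L^1}$), so the left-hand side becomes $(\|\omega(t)-q\|_{L^1}+\|q\|_{L^1}-\|\omega_{in}\|_{L^1})^2$ and $\beta_{f^*,q^*}=\beta_{\omega_{in}^*,q^*}$; conservation of $\mathcal A$ (resp. $\mathcal B$) turns the energy term into $\int_\Omega\sigma(\omega_{in}-q)\,dx$. In the bracket of \fref{ineq1} I would then drop the nonnegative contribution $-\beta_{q^*,f^*}(s)\,b_\sigma(\mu_q(s))$ and bound $b_\sigma(2\mu_q(s))\le 2\mu_q(s)/\pi$ (an inequality that persists on a bounded disc, where $b_\sigma$ saturates at $R^2$), which produces exactly the term $\tfrac{2}{\pi}\int_0^{+\infty}\mu_q(s)\beta_{\omega_{in}^*,q^*}(s)\,ds$ of \fref{ineq-E2D-radial}; in the rectangular case the factor $2/L_1$ arises in the same way from $b_\sigma(2\mu_q(s))=2\mu_q(s)/L_1$. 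For the compactly supported refinements \fref{ineq-E2D-radial-c} and \fref{ineq-E2D-rectangular-c} I would bound $\mu_q(s)\le\mbox{meas}(\mbox{Supp}(q))$ on the range where $\beta_{\omega_{in}^*,q^*}(s)\neq 0$ and use the layer-cake identity $\int_0^{+\infty}\beta_{\omega_{in}^*,q^*}(s)\,ds=\int_\Omega(q^*-\omega_{in}^*)_+\,dx\le\|\omega_{in}^*-q^*\|_{L^1}$.

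The genuinely substantive points, rather than the algebra above, are the two structural facts supplied from outside Theorem \ref{thm1}: that the conserved momentum has an affine distribution function on these symmetric domains (which is what makes $K$ explicit and finite and $b_\sigma$ linear), and that the weak solutions considered conserve both the rearrangement and the momentum functional. The latter is the main obstacle: equimeasurability is standard for the Yudovich $L^1\cap L^\infty$ solutions, but the conservation of $\mathcal A$ and $\mathcal B$ must be justified at the regularity available. It follows from the rotational invariance of the disc (resp. the $x_1$-translation invariance of the rectangle) together with the Dirichlet/periodic boundary conditions, via an integration by parts reducing to $\int_\Omega\partial_\theta\psi\,\Delta\psi\,dx=0$ (resp. $\int_\Omega\partial_{x_1}\psi\,\Delta\psi\,dx=0$), and should be checked to persist for the weak solutions rather than only for smooth ones.
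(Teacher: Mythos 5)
Your proposal is correct and follows essentially the same route as the paper's own proof: the paper likewise takes $\sigma(x)=|x|^2$ (resp.\ $\sigma(x)=x_2$), computes $a_\sigma(e)=\pi\min(e,R^2)$, $b_\sigma(\mu)=\mu/\pi$, $B_\sigma(\mu)=\mu^2/(2\pi)$, $H_\sigma\equiv 1/\pi$ (resp.\ the analogues with $L_1$), obtains $K(q^*,\sigma)=4\pi\|q\|_{L^\infty}$ (resp.\ $4L_1\|q\|_{L^\infty}$), applies \fref{ineq1} to $f=\omega(t)$ with $q=q^{*\sigma}$, and concludes via the conservation of the rearrangement and of the momentum functional, with the same final bounds in the compactly supported case. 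Your two added precisions --- justifying $q^{*\sigma}=q$ through the uniqueness part of Proposition \ref{f*theta-equi}, and noting that $b_\sigma(2\mu_q(s))\le 2\mu_q(s)/\pi$ persists when $b_\sigma$ saturates at $R^2$ on a bounded disc, as well as flagging that conservation of $\mathcal{A}$, $\mathcal{B}$ must hold at weak-solution regularity --- are points the paper passes over silently, but they do not change the argument.
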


The proof of this theorem is given in section \ref{sectionEuler1P}.  We now briefly show how Theorem \ref{thm-Euler1} directly implies  the stability of steady states  of the form \fref{steady-radial} or \fref{ss-x2}.
We just present the argument  for the steady steady states \fref{steady-radial}, the proof for  \fref{ss-x2} is similar.

\medskip
\noindent{\em Proof of a stability result using Theorem \ref{thm-Euler1}.}   Let $q$ be a steady state solution to the 2D Euler equation \fref{euler-vorticity} of the form \fref{steady-radial} such that $\int_{\Omega} |x|q(x)dx < +\infty.$ We want to prove
that if a sequence of initial data $\omega_{in}^n$ satifies
$$   \|\omega_{in}^n - q\|_{L^1} \to 0 \ \ \mbox{and}\  \ \int_{\Omega} |x|^2 |\omega_{in}^n(x) - q(x)| dx \to 0, \quad \mbox{as} \ n\to +\infty,$$
then any corresponding weak solution $\omega^n(t,x)$ to \fref{euler-vorticity} satisfies:
$$   \sup_{t\geq 0} \|\omega^n(t) - q\|_{L^1} \to 0 \ \ \mbox{and}\  \   \sup_{t\geq 0} \int_{\Omega} |x|^2 |\omega^n(t,x) - q(x)| dx \to 0.$$ 
The case of compactly supported $q$  is a straightforward consequence of \fref{ineq-E2D-radial-c} and the contractivirt property of the rearrangement  $\|f^*-g^*\|_{L^1}\leq \|f-g\|_{L^1}$.  We then assume  that $R=+\infty$ and that the support of $q$ may be unbounded.
We apply inequality \fref{ineq-E2D-radial} to $\omega^n(t)$ and clearly see that it suffices to prove
that 
\be
\label{toprove} \int_0^{+\infty}  \mu_q(s) \beta_{(\omega_{in}^{n})^*, q^*}(s) ds \to 0, \quad \mbox{as} \quad n\to +\infty.
\ee
To prove \fref{toprove} we first observe that 
$$ \int_0^{+\infty} \beta_{(\omega_{in}^{n})^*, q^*}(s) ds = \int_{\Omega} (q^*(x)-(\omega_{in}^n)^*(x))_+ dx\leq 
\|(\omega_{in}^n)^* - q^*\|_{L^1}\leq \|\omega_{in}^n - q\|_{L^1} \to 0.$$
This means that $\mu_q(s) \beta_{(\omega_{in}^{n})^*, q^*}(s)\to 0$ for almost every $s>0$. It is the sufficient
to  suitably dominate this quantity in order to get \fref{toprove}. In fact we clearly have  $\mu_q(s) \beta_{(\omega_{in}^{n})^*, q^*}(s)\leq \mu_q(s)^2$ and we claim that
\be
\label{clalam}
\int_0^{+\infty} \mu_q(s)^2 ds <+\infty.
\ee
This claim is proved as follows
\begin{align*}\int_{\RR^2} |x|^2q(x) dx &=  \int_\RR |x|^2 \left(\int_0^{q(x)} ds\right) dx =
  \int_0^{+\infty} \left(\int_{q(x)>s} |x|^2dx\right) ds \\
&= \int_0^{+\infty} \left(\int_{q^\sharp(\pi |x|^2)>s} |x|^2dx\right) ds = \int_0^{+\infty} \left(\int_{\mu_q(s)>\pi |x|^2} |x|^2dx\right) ds\\
&= \frac{1}{2\pi} \int_0^{+\infty} \mu_q(s)^2 ds.
\end{align*}
This proves \fref{clalam} which implies also \fref{toprove}.  The proof of the stability statement is then complete.

\medskip

We  now give a  stability inequality for another class of steady states solutions to the 2D Euler equation  in case of a bounded domain with no  specific symmetries.  This class is given by
\fref{SS-phi-euler} where $F$ is a decreasing function.

\begin{Thm}
\label{thm-Euler2}[Stability inequalities for 2D Euler on bounded domains]
 Let  $\Omega$ be a bounded  domain  of $\RR^2$ of class ${\mathcal C}^{2,\alpha}$ for some $\alpha\in (0,1)$. Let $F: \RR \to \RR$ be a nonnegative function which is continuous and decreasing on its support. Assume that  $F(0) >0$ and consider  $\omega_0$  satisfying 
  $$\omega_0 (x)= F(\psi_0(x)), \quad \mbox{with} \qquad -\Delta \psi_0=\omega_0, \quad  \quad \mbox{and}\quad  \psi_0=0 \ \mbox{on}\  \pa \Omega.$$   Then\\
 \noindent i) the function
 $$\Psi_0(\mu)= \int_0^\mu \psi_0^\sharp(\mbox{meas}(\Omega)-s) ds, \quad \mbox{for all}\  \ 0\leq \mu\leq \mbox{meas}(\Omega),$$
 is strictly convex, and\\
 \noindent  ii)  there exists a positive constant $C$ such that the following holds.  For all nonnegative function $\omega \in L^1(\Omega)$, we have the following inequality
\be
\label{ineq-E2D-domain}
\begin{array}{l}
{\mathcal H}(\omega) - {\mathcal H} (\omega_0) +\|\psi_0\|_{L^\infty} \|\omega^* - \omega_{0}^*\|_{L^1}\ds \geq  \|\nabla \psi- \nabla \psi_0\|^2_{L^2} + \\ \ds \hspace{2cm} \int_0^{+\infty}  \left[\Psi_0 (\mu_0(s)+ \beta_{\omega}(s))+ \Psi_0 (\mu_0(s)- \beta_{\omega}(s)) -2\Psi_0 (\mu_0(s))\right] ds
\end{array}
\ee
where, for all $s>0$ 
$$\mu_0(s)= \mbox{meas} \{ x\in \Omega : \omega_0(x)> s\}, \qquad \beta_\omega (s)= \mbox{meas} \{ x\in \Omega :  \omega(x)\leq  s< \omega_0(x)\},$$
and where
$${\mathcal H}(\omega)= \frac{1}{2}\int_{\Omega}  \psi(x) \omega(x) dx= \frac{1}{2}\int_{\Omega} 
|\nabla \psi(x)|^2 dx$$
with 
$$ -\Delta \psi=\omega , \quad \mbox{and}\quad  \psi=0\quad  \mbox{on} \quad \pa\Omega.$$
\end{Thm}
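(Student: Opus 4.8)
The plan is to recognize the kinetic energy \(\mathcal{H}\) as a quadratic perturbation of the \emph{linear} functional controlled by the $\sigma$-rearrangement for the choice \(\sigma=\psi_0\), and then to feed the linear part into Theorem \ref{thm1}, specifically the estimate \fref{ineq11}.

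First I would set \(\sigma=\psi_0\) and check that \(\psi_0\) fits Definition \ref{def1}. By elliptic regularity \(\psi_0\in\mathcal{C}^{2,\alpha}(\overline\Omega)\), and since \(-\Delta\psi_0=F(\psi_0)\ge0\) with \(F(0)>0\), the strong maximum principle gives \(\psi_0>0\) inside \(\Omega\) and \(\psi_0=0\) on \(\pa\Omega\), so \(e_{min}=0\) is finite. The crucial nondegeneracy is that each level set \(\{\psi_0=e\}\) on which \(F(e)>0\) is Lebesgue-null: on a positive-measure level set one has \(\nabla\psi_0=0\) and \(D^2\psi_0=0\) a.e., hence \(\omega_0=-\Delta\psi_0=0\) a.e. there, forcing \(F(e)=0\). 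This verifies the hypotheses (in particular \fref{levelset0}) and shows the decreasing rearrangement \(\psi_0^\sharp\) is strictly decreasing on the range where \(F>0\); thus \(b_{\psi_0}(s)=\psi_0^\sharp(\mbox{meas}(\Omega)-s)\) is strictly increasing, and by Remark \ref{Rk4} (applicable since \(e_{min}=0\) is finite) \(B_{\psi_0}(\mu)=\int_0^\mu b_{\psi_0}(s)\,ds=\Psi_0(\mu)\) is strictly convex. This is part i). Moreover, since \(F\) is decreasing, \(\omega_0=F(\psi_0)\) is a nonincreasing function of \(\sigma=\psi_0\) and is equimeasurable with itself, so by the uniqueness part of Proposition \ref{f*theta-equi} it is a fixed point, \(\omega_0=\omega_0^{*\psi_0}\).

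For part ii) the key is the exact Green-type expansion of the relative energy: writing \(g=\psi-\psi_0\), which vanishes on \(\pa\Omega\), and integrating by parts using \(-\Delta\psi_0=\omega_0\), \(-\Delta\psi=\omega\), one obtains
\[
\mathcal{H}(\omega)-\mathcal{H}(\omega_0)=\int_\Omega \psi_0(\omega-\omega_0)\,dx+\tfrac12\|\nabla\psi-\nabla\psi_0\|_{L^2}^2 .
\]
This isolates the Dirichlet term; the remaining linear term is exactly \(\int_\Omega \sigma(\omega-\omega_0^{*\sigma})\,dx\), i.e. the left-hand side of \fref{ineq11} with \(\sigma=\psi_0\), \(q=\omega_0\), \(f=\omega\). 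Applying \fref{ineq11}, with \(B_\sigma=\Psi_0\), \(\mu_q=\mu_0\) and \(\beta_{f,q}=\beta_\omega\), bounds it below by the convexity integral \(\int_0^\infty[\Psi_0(\mu_0+\beta_\omega)+\Psi_0(\mu_0-\beta_\omega)-2\Psi_0(\mu_0)]\,ds\) plus the correction \(\int_0^\infty[\beta_{\omega_0^*,\omega^*}(s)b_{\psi_0}(\mu_0(s))-\beta_{\omega^*,\omega_0^*}(s)b_{\psi_0}(2\mu_0(s))]\,ds\). Since \(0\le b_{\psi_0}\le\|\psi_0\|_{L^\infty}\) and the first summand is nonnegative, this correction is \(\ge-\|\psi_0\|_{L^\infty}\int_0^\infty\beta_{\omega^*,\omega_0^*}(s)\,ds\ge-\|\psi_0\|_{L^\infty}\|\omega^*-\omega_0^*\|_{L^1}\), using \(\int_0^\infty\beta_{\omega^*,\omega_0^*}\,ds=\int(\omega_0^*-\omega^*)_+\,dx\le\|\omega^*-\omega_0^*\|_{L^1}\). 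Substituting into the expansion and moving the \(\|\psi_0\|_{L^\infty}\)-term to the left yields \fref{ineq-E2D-domain}.

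The principal obstacle is analytic rather than algebraic: verifying that \(\sigma=\psi_0\) meets \emph{all} the structural requirements of Definition \ref{def1} (finiteness of \(B_\sigma\), the measure-zero level sets, the limiting behaviour as \(e\to e_{max}\), and the strict monotonicity of \(\psi_0^\sharp\) needed for strict convexity in i)) rests entirely on elliptic regularity and the strong maximum principle for the semilinear equation \(-\Delta\psi_0=F(\psi_0)\), together with the vacuum structure of \(F\) where \(F=0\). The only point of bookkeeping that must be watched is the exact constant in front of \(\|\nabla\psi-\nabla\psi_0\|_{L^2}^2\): the quadratic remainder in the Green identity furnishes this gradient term, and one must confirm that the accounting of the convexity and correction terms produces the coefficient appearing in \fref{ineq-E2D-domain}.
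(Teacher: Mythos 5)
Your proposal follows essentially the same route as the paper's own proof: the same choice $\sigma=\psi_0$, the same level-set lemma (via the a.e.\ vanishing of $\nabla\psi_0$ and $\Delta\psi_0$ on level sets, forcing $F$ to vanish there), the identical Green-type expansion ${\mathcal H}(\omega)-{\mathcal H}(\omega_0)=\int_\Omega\psi_0(\omega-\omega_0)\,dx+\tfrac12\|\nabla\psi-\nabla\psi_0\|_{L^2}^2$, and the same application of \fref{ineq11} with $b_{\psi_0}(\mu)=\psi_0^{\sharp}(\mbox{meas}(\Omega)-\mu)$ bounded by $\|\psi_0\|_{L^\infty}$. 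The coefficient issue you flag is real but is inherited from the paper itself: both your argument and the paper's proof yield the factor $\tfrac12$ in front of $\|\nabla\psi-\nabla\psi_0\|_{L^2}^2$, whereas the statement of \fref{ineq-E2D-domain} displays coefficient $1$.
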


\medskip

The proof of this theorem is given in section \ref{proof-Euler2}.

\medskip
\noindent{\em Proof of a stability result using Theorem \ref{thm-Euler2}.}
To prove the stability of a steady state $\omega_0$ for the 2D Euler flow given in Thoerem \ref{thm-Euler2}, it is sufficient to prove that:
if   $\omega_n$ is a sequence of functions  which converges to $\omega_0$ in the following sense
\be
\label{hyp-conv}
 \mathcal{H}(\omega_n) \to \mathcal{H}(\omega_0) \quad \mbox{and}\quad \|\omega_n^*- \omega_0^*\|_{L^1} \to 0,
 \ee
then, up to a subequence extraction, we have 
\be
\label{res-conv}
 \|\omega_n- \omega_0\|_{L^1} \to 0 \quad  \mbox{and} \quad
    \|\nabla \psi_n- \nabla\psi_0\|_{L^2}^2 \to 0 .\ee
Let $\omega_n$ be a sequence satisfying \fref{hyp-conv}, then from \fref{ineq-E2D-domain} and the convexity of $\Psi_0$ we have
$$ \int_0^{+\infty}  \left[\Psi_0 (\mu_n(s)+ \beta_{\omega_n}(s))+ \Psi_0 (\mu_n(s)- \beta_{\omega_n}(s)) -2\Psi_0 (\mu_n(s))\right] ds  \to 0, \quad \mbox{as} \ n\to +\infty,$$
where $\mu_n(s)= \mbox{meas} \{ x\in \Omega : \omega_n(x)> s\}.$
Therefore the convexity of $\Psi_0$ implies
$$\Psi_0 (\mu_n(s)+ \beta_{\omega_n}(s))+ \Psi_0 (\mu_n(s)- \beta_{\omega_n}(s)) -2\Psi_0 (\mu_n(s))   \to 0,  \quad \mbox{when} \ n\to +\infty, \quad \mbox{for} \ \  a. e. \ \ s\geq 0, $$
up to subsequence extraction. Now, since  $\beta_{\omega_n}$ is a bounded sequence ($\beta_{\omega_n}(s)\leq \mu_0(s)\leq \mbox{meas} (\Omega)),$ and since $\Psi_0$ is  strictly convex,  any accumulation point of the sequence $\beta_{\omega_n}(s)$ must be equal to $0$. Therefore
$$ \beta_{\omega_n}(s) \to 0, \quad  \mbox{for} \ \  a. e. \ \ s\geq 0.$$
We then integrate this identity (using dominated convergence) and get
\be
\label{betanto0} \int_0^\infty \beta_{\omega_n}(s) ds \to 0.\ee
Now, we introduce $\alpha_{\omega_n}(s)=\mbox{meas} \{ x\in \Omega :  \omega_0(x)\leq  s< \omega_n(x)\}$.
We clearly have
$$ \int_0^\infty (\alpha_{\omega_n}(s) + \beta_{\omega_n}(s))ds = \int_{\Omega} (\omega_n(x)- \omega_0(x))_+dx +\int_{\Omega} (\omega_0(x)- \omega_n(x))_+dx = \|\omega_n- \omega_0\|_{L^1},$$
and 
$$ \begin{array}{ll}
\ds \int_0^\infty (\alpha_{\omega_n}(s) - \beta_{\omega_n}(s))ds &\ds = \int_{\Omega} (\omega_n(x)- \omega_0(x))_+dx -\int_{\Omega} (\omega_0(x)- \omega_n(x))_+dx \\ &\ds =\|\omega_n\|_{L^1}- \|\omega_0\|_{L^1} 
= \|\omega_n^*\|_{L^1}- \|\omega_0^*\|_{L^1}.\\
\end{array}$$
Thus, from \fref{betanto0} 
$$\int_0^\infty \beta_{\omega_n}(s) ds = \frac{1}{2}\left(\|\omega_n- \omega_0\|_{L^1} + \|\omega_0^*\|_{L^1}- \|\omega_n^*\|_{L^1}\right)\to 0 \quad \mbox{as} \quad n\to +\infty.$$
From  $\|\omega_n^*- \omega_0^*\|_{L^1} \to 0$ and \fref{betanto0} we then get $\|\omega_n- \omega_0\|_{L^1} \to 0$.
This concludes the proof of the claimed stability property.

\section{Proofs of the functional inequalities}
The goal of this section is to prove Theorem \ref{thm1}. We proceed in several steps. We first study the properties  of the Jacobian $a_\sigma$ defined by \fref{atheta}, then give the proof  of Proposition \ref{f*theta-equi} , and finally  deal with the proof of Theorem \ref{thm1} and Corollary \ref{cor1}.
\subsection{Properties of the function $a_\sigma$.}
\label{prop-atheta}
We start by giving some useful properties of the function $a_\sigma$ defined by \fref{atheta}. We have the following lemma.
\begin{Lemma}
\label{lem_atheta}  Assume the assumptions made on $\sigma$ in Definition \ref{def1} to be satisfied.  Then
\begin{enumerate}[ label=\roman*)]
\item For all $e<e_{max} $, we have $a_\sigma(e) <  \mbox{meas} (\Omega)$. Moreover, the function $a_\sigma$ is nondecreasing and  continuous from $]-\infty, e_{max}[$ to $[0,\mbox{meas}(\Omega)[$ with $\lim_{e\to e_{min}} a_\sigma(e)= 0.$ 
In particular
 $a_{\sigma}(e)=0$ for all $e\leq e_{min}$ if  $e_{min}$ is finite, and $a_\sigma(e) = \mbox{meas} (\Omega)$ for all $e\geq e_{max}$ if $e_{max}$ is finite.
\item If  $b_\sigma$  is the pseudo-inverse of $a_\sigma$ defined  by \fref{btheta}, then
 \be\label{inv-droite} a_\sigma \circ b_{\sigma} (\mu) = \mu,  \quad \forall \mu \in [0,\mbox{meas}(\Omega)].\ee
 Moreover, for all  $e\in ]-\infty, e_{max}[$ and $ \mu\geq 0$,  we have
\be
\label{equiv-btheta} a_\sigma(e)\leq \mu \Longleftrightarrow e \leq  b_\sigma(\mu). 
\ee
\item The level sets of the function $a_\sigma \circ \sigma$ are of zero measure:
\be
\label{alkalam}
\mbox{meas}  \{x\in \Omega;\  a_\sigma(\sigma(x))= \mu \ \mbox{and} \  \sigma(x)<e_{max}\} = 0, \quad \forall \mu \geq 0. 
\ee
\item When $e_{min}$ is finite, the function $B_{\sigma}(\mu)$ given by \fref{Btheta} is well defined and we have
\be
\label{Btheta-simple} 
B_\sigma(\mu)= \int_0^\mu b_\sigma(s) ds.
\ee
where $b_\sigma$ is the pseudo-inverse of $a_\sigma$ defined by \fref{btheta}.
\end{enumerate}
\end{Lemma}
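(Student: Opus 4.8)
The plan is to establish the four assertions in the order stated, since each relies on the preceding ones.

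For i), monotonicity of $a_\sigma$ is immediate, the sublevel sets $\{\sigma<e\}$ increasing with $e$. The strict bound $a_\sigma(e)<\mbox{meas}(\Omega)$ for $e<e_{max}$ comes straight from the definition of $e_{max}$ in \fref{A2}: such an $e$ is dominated by some $e'<e_{max}$ lying in $\{a_\sigma<\mbox{meas}(\Omega)\}$, whence $a_\sigma(e)\le a_\sigma(e')<\mbox{meas}(\Omega)$ and in particular $a_\sigma(e)$ is finite. Continuity is the real point. Left-continuity holds for free, since $\{\sigma<e\}=\bigcup_{e'<e}\{\sigma<e'\}$ and the measure is continuous from below. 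Right-continuity is where the level-set hypothesis \fref{levelset0} enters: writing $\{\sigma\le e\}=\bigcap_n\{\sigma<e+1/n\}$ and using continuity from above (legitimate because these sets have finite measure for $e<e_{max}$) gives $\lim_{e'\to e^+}a_\sigma(e')=\mbox{meas}\{\sigma\le e\}=a_\sigma(e)+\mbox{meas}\{\sigma=e\}=a_\sigma(e)$. The limit $a_\sigma(e)\to 0$ as $e\to e_{min}$ then follows from continuity from above, the sets $\{\sigma<e\}$ shrinking to a null set (contained in $\{\sigma\le e_{min}\}$, null by the definition of the essential infimum and \fref{levelset0}); the two boundary identities at finite $e_{min}$ and $e_{max}$ follow from the definition of $e_{max}$ as a supremum together with the imposed limit $a_\sigma(e)\to\mbox{meas}(\Omega)$.

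For ii), I would exploit that $a_\sigma$ is continuous and nondecreasing on $]-\infty,e_{max}[$ with values reaching up to $\mbox{meas}(\Omega)$. Fix $\mu\in[0,\mbox{meas}(\Omega)[$; the set $\{t<e_{max}:a_\sigma(t)\le\mu\}$ is nonempty (it contains points near $e_{min}$) and bounded above by some $t_0<e_{max}$ with $a_\sigma(t_0)>\mu$, so $b_\sigma(\mu)<e_{max}$. Approaching $b_\sigma(\mu)$ from the left and the right and invoking continuity squeezes $a_\sigma(b_\sigma(\mu))$ between $\le\mu$ and $\ge\mu$, giving \fref{inv-droite}; the endpoint $\mu=\mbox{meas}(\Omega)$, when finite, is handled by $b_\sigma(\mbox{meas}(\Omega))=e_{max}$ and part i). The equivalence \fref{equiv-btheta} is then formal: ``$\Rightarrow$'' is the definition of the supremum, and ``$\Leftarrow$'' follows by applying the monotone $a_\sigma$ to $e\le b_\sigma(\mu)$ and using \fref{inv-droite}. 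For iii), the idea is that a level $\{a_\sigma(\sigma(x))=\mu\}$ forces $\sigma(x)$ into the maximal interval $[e_1,e_2]$ on which the continuous nondecreasing $a_\sigma$ takes the constant value $\mu$; there $a_\sigma(e_2)-a_\sigma(e_1)=0$ gives $\mbox{meas}\{e_1\le\sigma<e_2\}=0$, while the single endpoint contributes $\mbox{meas}\{\sigma=e_2\}=0$ by \fref{levelset0}, and adding these yields \fref{alkalam}.

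The main work, and the step I expect to be delicate, is iv). When $e_{min}$ is finite, $\sigma\ge e_{min}$ a.e., so the integrand in \fref{Btheta} is bounded below and $B_\sigma$ is at least well-posed in $]-\infty,+\infty]$. I would first record, from \fref{inv-droite} and \fref{equiv-btheta} together with the continuity of $a_\sigma$, that the map $A:=a_\sigma\circ\sigma$ pushes the Lebesgue measure on $\{\sigma<e_{max}\}$ forward to the Lebesgue measure on $[0,\mbox{meas}(\Omega)[$, i.e. $\mbox{meas}\{A<\mu\}=\mu$. Next, part iii) guarantees that for almost every $x$ the value $\sigma(x)$ is not interior to a flat interval of $a_\sigma$, so that $\sigma(x)=b_\sigma(A(x))$ a.e.; moreover $b_\sigma$ is bounded on $[0,\mu]$, lying between $e_{min}$ and $b_\sigma(\mu)<e_{max}$, both finite. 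Combining these two facts through the change of variables $s=A(x)$ then yields $B_\sigma(\mu)=\int_{\{A<\mu\}}b_\sigma(A(x))\,dx=\int_0^\mu b_\sigma(s)\,ds$, which is finite and thus simultaneously establishes well-posedness and the formula \fref{Btheta-simple}. The subtlety is to justify the change of variables rigorously; I expect the cleanest route is to approximate $b_\sigma$ by simple functions (equivalently, a layer-cake argument applied to $\sigma-e_{min}\ge0$), so that the a.e. identity $\sigma=b_\sigma\circ A$ and the pushforward property can be combined without integrability ambiguities.
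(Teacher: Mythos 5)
Your treatment of assertions i), ii) and iii) is correct and follows essentially the paper's own route: left-continuity of $a_\sigma$ for free, right-continuity from the level-set hypothesis \fref{levelset0} (the paper phrases this as an equivalence between \fref{levelset0} and continuity of $a_\sigma$), the two-sided squeeze around the supremum to get \fref{inv-droite}, and the flat-interval analysis for \fref{alkalam}. Where you genuinely depart from the paper is assertion iv). The paper writes $B_\sigma(\mu)=\int_{\sigma(x)\le b_\sigma(\mu)}\sigma(x)\,dx$, applies Fubini (a layer-cake in $t$ between $e_{min}$ and $\sigma(x)$) to obtain $B_\sigma(\mu)=\int_{e_{min}}^{b_\sigma(\mu)}\bigl(\mu-a_\sigma(t)\bigr)\,dt+\mu e_{min}$, and then identifies this with $\int_0^\mu b_\sigma(s)\,ds$ by checking that the two sides have the same distributional derivative in $\mu$ (via \fref{inv-droite}) and agree at $\mu=0$. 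You instead prove that $A:=a_\sigma\circ\sigma$ pushes Lebesgue measure on $\{\sigma<e_{max}\}$ forward to Lebesgue measure on $[0,\mbox{meas}(\Omega)[$, that $\sigma=b_\sigma\circ A$ a.e., and conclude by the change-of-variables formula for image measures. Both are valid, and each buys something. Your route avoids the paper's one delicate point: identifying distributional derivatives is subtle exactly where $b_\sigma$ jumps, since the formal term $b_\sigma'(\mu)\bigl(\mu-a_\sigma(b_\sigma(\mu))\bigr)$ is a singular measure multiplied by a function vanishing on its support, whereas the pushforward change of variables for a bounded measurable integrand is the standard simple-function/monotone-convergence machine. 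Conversely, the paper never needs the a.e. identity $\sigma=b_\sigma\circ A$, which in your argument deserves one more sentence than you give it: assertion iii) controls each single level set of $A$, and you must add that $a_\sigma$ has at most countably many flat intervals, so that the exceptional set --- all $x$ for which $a_\sigma$ is constant on some interval $[\sigma(x),\sigma(x)+\delta]$, which includes left endpoints of flat intervals and not only interior points as you state --- is a countable union of sets that iii) makes null. With that observation made explicit, your proof of iv) is complete and arguably cleaner than the paper's.
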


\begin{proof}
We first prove assertion {\em i)}. The fact that  $a_\sigma(e) <  \mbox{meas} (\Omega)$ whenever $e<e_{max} $ comes from the definition \fref{A2} of 
$e_{max} $ and the monotonicity of $a_\sigma$. Let us prove the continuity of $a_\sigma$. In fact assumption \fref{levelset0} on $\sigma$ is equivalent to the continuity of $a_\sigma$ on $ ]-\infty, e_{max}[$.
 Indeed, assume first the continuity of $a_\sigma$ on $]-\infty, e_{max}[$ and take $e\in ]-\infty, e_{max}[$. For all  sufficiently small $\e >0$  we have
$$a_\sigma (e+\e) \geq \mbox{meas}\{x\in \Omega;  \sigma(x)=e\}  + a_\sigma(e) .$$
Letting $\e \to 0$ and using the continuity of $a_\sigma$ we get $\mbox{meas}\{x\in \Omega;  \sigma(x)=e\}=0$ as claimed. Conversely, assume that all the level sets $\{x\in \Omega;  \sigma(x)=e\}$  are of zero measure for all  $e\in ]-\infty, e_{max}[$, then 
$a_\sigma (e+\e)- a_\sigma (e)= \mbox{meas}\{x\in \Omega; e\leq \sigma(x)< e+\e\} $  goes to 
$\mbox{meas}\{x\in \Omega;  \sigma(x)=e\}=0$ when $\e \to 0$. This gives the right continuity of $a_\sigma$. Since $a_\sigma$ is always left continuous, we then deduce its continuity as claimed. 
We now prove that $\lim_{e\to e_{min}} a_\sigma(e)= 0$.  If $e_{min}>-\infty$ then for sufficiently small $\e>0$, 
$a_\sigma( e_{min}+ \e)$ is finite, and tends to $a_\sigma( e_{min})$ when $\e\to 0$ (from the continuity of $a_\sigma$).
But from the definition of $e_{min}$ we have $a_\sigma( e_{min})=\mbox{meas}\{x\in \Omega;  \sigma(x)<e_{min}\}=0$ and this yields $\lim_{e\to e_{min}} a_\sigma(e)= 0$.  Assume now $e_{min}=-\infty$. We know that $a_\sigma(e)$ is finite for 
$e<e_{max}$ and that $\mathds{1}_{\sigma(x)<e} \leq \mathds{1}_{\sigma(x)<e_0}$ for all $e\leq e_0.$ Therefore by dominated convergence we get $\lim_{e\to -\infty} a_\sigma(e)= 0$. The rest of assertion {\em i)} is elementary the proof of which is left to the reader.

Now we prove assertion {\em ii)}. We start by proving \fref{inv-droite} and first consider $\mu \in ]0, \mbox{meas}(\Omega)[.$  Since $\lim_{e\to e_{max}^-} a_\sigma(e)=\mbox{meas} (\Omega) $, there exists $t_0<e_{max}$ such that $a_\sigma(t_0) > \mu$, which implies that $a_\sigma(t) > \mu$ for all $t\geq t_0$. Therefore, from the definition \fref{btheta}
of $b_\sigma(\mu)$ we have $b_\sigma(\mu) \leq t_0<e_{max} $ and then $a_\sigma(b_\sigma(\mu)) \leq \mu$.
Moreover $a_\sigma(b_\sigma(\mu)+ \e) > \mu$ for all sufficiently small $\e>0$, therefore the continuity of $a_\sigma$ allows to pass to the limit $\e \to 0$ and get
$a_\sigma(b_\sigma(\mu)) \geq \mu$.  We conclude that $a_\sigma(b_\sigma(\mu)) = \mu$ for all  $\mu \in ]0, \mbox{meas}(\Omega)[.$  It is straightforward to see that this also holds for $\mu=0$ and for $\mu= \mbox{meas}(\Omega).$ This ends the proof of \fref{inv-droite}.  The claim \fref{equiv-btheta} is now a direct consequence of definition \fref{btheta} and of \fref{inv-droite}. Indeed if $e<e_{max}$ and $\mu\geq 0$ are such that  $a_\sigma(e)\leq \mu$ then from the definition of $b_\sigma$ we get $e\leq b_\sigma(\mu)$. Conversly if $e\leq b_\sigma(\mu)$, then we just compose by the nondecreasing function  $a_\sigma$ and use \fref{inv-droite} to get $a_\sigma(e)\leq \mu$. 
This ends the proof of assertion {\em ii)}.

We now prove assertion {\em iii)}. Indeed,  assertion \fref{alkalam} is clearly true when the set  $\{e< e_{max} : a_\sigma(e)= \mu \}$ is  empty. Assume now that this set is not empty. Since $a_\sigma$ is nondecreasing and continuous, the set $\{ e< e_{max} : a_\sigma(e)= \mu\}$ is either reduced to one point or is an interval $I$ of $\RR$ with a nonempty interior. In the first case, claim \fref{alkalam} comes from the fact that the level sets of $\sigma$ are of zero measure.  In the second case, let  $[e_1,e_2]$ any subinterval of $I$ with $e_1<e_2$, we have
$$\begin{array}{ll}\mbox{meas}\left( \sigma^{-1}([e_1,e_2])\right)&\leq \mbox{meas} \{x\in \Omega;\   e_1\leq \sigma(x)\leq e_2 \} \\ &= a_\sigma(e_2)-a_\sigma(e_1)=0.
\end{array}$$
This implies that $\mbox{meas}\left(\sigma^{-1}(I)\right)=0,$
and therefore claim \fref{alkalam} is proved.

We finally prove assertion {\em vi)}. We use the properties of $a_\sigma$ stated in assertions {\em i)-ii)-iii)} and  write 
from \fref{Btheta}
 \begin{align*}\ds B_{\sigma}(\mu)&=\ds  \int_{a_{\sigma}(\sigma(x))< \mu} \sigma(x)dx =  \int_{a_{\sigma}(\sigma(x))\leq  \mu} \sigma(x)dx=\int_{\sigma(x))\leq  b_{\sigma}(\mu)} \sigma(x)dx     \\ 
&= \ds  \int_{\sigma(x))\leq b_\sigma(\mu)} \int_{e_{min}}^{\sigma(x)} dt dx + \mu e_{min} = \int_{e_{min}}^{b_\sigma(\mu)} \left(\int_{t\leq \sigma(x)<b_\sigma(\mu)} dx \right) dt+ \mu e_{min}\\ 
&\ds =\int_{e_{min}}^{b_\sigma(\mu)} \left( \mu - a_{\sigma}(t)\right) dt  + \mu e_{min}.
\end{align*}
Therefore, to prove \fref{Btheta-simple}, it remains to show that
$$  
\int_{e_{min}}^{b_\sigma(\mu)} \left( \mu - a_{\sigma}(t)\right) dt  + \mu e_{min} = \int_0^\mu b_\sigma (s) ds.$$
This identity  can be obtained by simply observing that  the distributional derivatives of the two sides with respect to $\mu$ are equal (using property \fref{inv-droite}). Since the two sides coincide at $\mu=0$ (note that $b_\sigma(0) = e_{min}$), we conclude that this identity holds true.

This ends the proof of Lemma \ref{lem_atheta}.

\end{proof}

\subsection{Proof Proposition \ref{f*theta-equi}.}
\label{proof-prop1}
It is well known, from the definition of $f^\sharp$ and the right-continuity of $\mu_f$, that we have
\be
\label{zarby}f^\sharp(r)>t \Longleftrightarrow  \mu_f(t)>r, \qquad \forall t\geq 0,\  r\geq 0.
\ee
Although \fref{zarby} is standard,  we briefly sketch its  elementary proof for a sake of completeness. Using the definition of $f^\sharp$  we  first have
$$ f^\sharp(r)>t \implies r< \mu_f(t).$$
If we take $\e>0$ and $ r<\mu_f(t+\e)$, we see that for all $t'$ such that $\mu_f(t') \leq r$ we have
$\mu_f(t')<\mu_f(t+\e)$ and then $t'>t+\e$. This means  $f^\sharp(r)\geq t+\e>t$.Thus
\be
\label{kk1} r<\mu_f(t+\e)\implies  f^\sharp(r)>t, \quad \forall \e >0.
\ee
Take now $r<\mu_f(t)$, then from the right-continuity of $\mu_f$ we have $\mu_f(t+\e) \to \mu_f(t)$ when $\e \to 0, \e>0.$ Therefore there exists $\e_0>0$ such that
$r<\mu_f(t+\e_0) \leq \mu_f(t)$, but from \fref{kk1}  this implies  that $f^\sharp(r)>t$. Thus \fref{zarby} is proved.
Now using \fref{zarby}   we have for $t\geq 0$
\be
\label{ttt}
\begin{array}{lll}
\ds \{x\in \Omega; \  f^{*\sigma}(x)>t\} &=&\ds  \{x\in \Omega;\  f^\sharp(a_\sigma(\sigma(x)))>t \ \mbox{and} \  \sigma(x)<e_{max}\} \\
& = &\ds \{x\in \Omega;\  a_\sigma(\sigma(x))< \mu_f(t) \ \mbox{and} \  \sigma(x)<e_{max}\}.
\end{array}
\ee
 
In particular if $\mu_f(t)=0$ then $\mu_{f^{*\sigma}}(t)=0= \mu_f(t)$.  We now assume that $\mu_f(t)>0$ and use \fref{alkalam}  and \fref{equiv-btheta} to get
$$\begin{array}{ll}\mbox{meas} \{x\in \Omega; \  f^{*\sigma}(x)>t\}&=\mbox{meas} \{x\in \Omega;\  a_\sigma(\sigma(x))< \mu_f(t) \ \mbox{and} \  \sigma(x)<e_{max}\}  \\
&= \mbox{meas} \{x\in \Omega;\  a_\sigma(\sigma(x))\leq \mu_f(t) \ \mbox{and} \  \sigma(x)<e_{max}\}\\
&= \mbox{meas} \{x\in \Omega;\  \sigma(x)\leq b_\sigma (\mu_f(t)) \ \mbox{and} \  \sigma(x)<e_{max}\}\\
&=\mbox{meas} \{x\in \Omega;\  \sigma(x)<b_\sigma (\mu_f(t))\}\\
&= a_\sigma ( b_\sigma(\mu_f(t))),
\end{array}$$
where we used the definition of $a_\sigma$.  We now  use identity \fref{inv-droite} of Lemma \ref{lem_atheta} and end  
the proof of the first part of Proposition \ref{f*theta-equi}, that is $f^{*\sigma}$ is equimeasurable with $f$.

To complete the proof of Proposition \ref{f*theta-equi}, it remains to show that $f^{*\sigma}$ is the only nonnegative function in $L^1(\Omega) $ which is a non increasing function of $\sigma(x)$ and is equimeasurable with $f$.  Let  $g(x)= F(\sigma(x))$  such a function.  We shall prove that $g=f^{*\sigma}$.

{\em Case 1, $e_{max} = +\infty$:}   In this case $a_\sigma$ is  a continuous and nondecreasing function from $\RR$ to $\RR^+$.  For all  $t\in \RR$, the set 
$\{e\in \RR: F(e)>F(t)\}$ is either empty or is an interval which does not contain $t$, and since $F$ is non increasing, the  interior of this set is of the form $]-\infty, R(t)[$, with $R(t)\leq t$.  Therefore, for all $x\in \RR^d$, $$\{y\in \RR^d: F(\sigma(y))>F(\sigma(x)\} \subset \{y\in \RR^d: \sigma(y)\leq R(\sigma(x))\} \subset \{y\in \RR^d: \sigma(y)\leq \sigma(x)\}.$$
This means that $\mu_g(g(x)) \leq a_\sigma(\sigma(x)).$   But since $g$ is equimeasurable to $f$, we have $\mu_g=\mu_f$ and then $\mu_f(g(x)) \leq a_\sigma(\sigma(x)).$ We conclude from the definition of $f^\sharp$ that
$$ g(x) \geq f^\sharp\circ a_\sigma(\sigma(x))= f^{*\sigma}(x),\quad \forall x\in \RR^d.$$
Since $g$ is equimeasurable to $f$ which itself is equimeasurable to $f^{*\sigma}$, we have
$$ \int_{\RR^d} (g(x) -  f^{*\sigma}(x)) dx =0,$$ and then conclude that
$$ g(x) =  f^{*\sigma}(x), \quad \mbox{for} \ a. e.  \  x\in \RR^d.$$
This ends the proof of Proposition \ref{f*theta-equi} in this case.

{\em Case 2, $e_{max} < +\infty$ and $\mbox{meas}(\Omega)<+\infty$:}  by assumption we have $a_\sigma(e_{max})=\mbox{meas}(\Omega)$
and therefore $\mbox{meas}\{x\in \RR^d: \sigma(x)\geq e_{max}\}=0.$ This means that $\sigma(x)<e_{max}$ for $a. e. \ x \in \RR^d.$  In particular
$$f^{*\sigma}(x) :=f^{\sharp}\circ a_\sigma(\sigma(x))\mathds{1}_{\sigma(x)<e_{max}}= f^{\sharp}\circ a_\sigma(\sigma(x)).$$
The equimeasurability can now be derived following exactly the same lines as in the previous case 1.

{\em Case 3, $e_{max} < +\infty$ and $\mbox{meas}(\Omega)=+\infty$:} In this case we do not have $\sigma(x)<e_{max}$ for $a. e. \ x \in \RR^d,$  but we shall prove
that $F(e)=0$ for all $e\geq e_{max}.$ Using that $g=F\circ \sigma$ is in $L^1(\Omega)$. Indeed, we know from \fref{mufinite} that for all $e>0$
$$\mbox{meas}  \{x\in \RR^d: F(\sigma(x))> e\} \leq \frac{1}{e}\|g\|_{L^1}.$$
Using the pseudo-inverse $\tilde F$ of the non increasing function $F$, we get
$$\mbox{meas}  \{x\in \RR^d: \sigma(x)<\tilde F (e)\} \leq \frac{1}{e}\|g\|_{L^1}.$$
This means that $a_\sigma (\tilde F (e))$ is finite and therefore  $\tilde F (e)< e_{max}$ by assumption on $\sigma$. Consequently
$F(e_{max}) \leq e$ for all $e>0$ and then $F(e_{max})=0$. This implies that $F(e)=0$ for all $e\geq e_{max}$.
We conclude that
$$ g(x) = f^{*\sigma}(x)= 0 \quad \mbox{if}\quad \sigma(x)\geq e_{max}.$$
The case where $\sigma(x)< e_{max}$ can be done in exactly similar way than Case 1.

This ends the proof of Proposition \ref{f*theta-equi}.


\subsection{Proof of Theorem 1}
\label{proofth1}

We proceed in several steps

\medskip

{\em Step 1:  Convexity of $B_\sigma$}:

\medskip

In this step we shall prove assertion {\em (i)} , that is the convexity of the function $B_\sigma$ defined by \fref{Btheta} on $[0,\mbox{meas}(\Omega)[$.  From \fref{alkalam} and  \fref{equiv-btheta}, we first observe that
$$B_\sigma (\mu)= \int_{a_\sigma(\sigma(x))\leq  \mu} \sigma(x) dx=\int_{\sigma(x)\leq  b_\sigma(\mu)} \sigma(x) dx,$$
where $b_\sigma$ is the pseudo-inverse of $a_\sigma$ defined by \fref{btheta}.
Note that from assertion {\em i)} of Lemma \ref{lem_atheta}, we have:   $a_\sigma(\sigma(x))\leq  \mu<\mbox{meas}(\Omega)\implies \sigma(x) < e_{max}$.  Let  $0\leq \mu_1< \mu_2<\mbox{meas}(\Omega)$ and $\mu= \frac{\mu_1 +\mu_2}{2}$, we have
\begin{align*}
B_\sigma(\mu_1) + B_\sigma(\mu_2) - 2 B_\sigma(\mu) &=  B_\sigma(\mu_2) - B_\sigma(\mu) - (B_\sigma(\mu)  - B_\sigma(\mu_1))  \\
&= \int_{b_\sigma(\mu)< \sigma(x)\leq b_\sigma(\mu_2)} \sigma(x) dx - \int_{b_\sigma(\mu_1)< \sigma(x)\leq b_\sigma(\mu)} \sigma(x) dx.\\
&\geq b_\sigma(\mu) \left[\mbox{meas}A_2- \mbox{meas}A_1\right],
\end{align*}
where $$A_1= \{x\in \Omega: b_\sigma(\mu_1)< \sigma(x)\leq b_\sigma(\mu)\}\quad \mbox{and} \quad A_2= \{ x\in \Omega: b_\sigma(\mu)< \sigma(x)\leq b_\sigma(\mu_2) \}.$$
Observing that  $\mbox{meas}A_1= a_\sigma \circ b_\sigma (\mu_2) - a_\sigma \circ b_\sigma (\mu)= \mu_2-\mu= \frac{\mu_2 -\mu_1}{2}, $ and similarly
$\mbox{meas}A_2= a_\sigma \circ b_\sigma (\mu) - a_\sigma \circ b_\sigma (\mu_1)= \mu-\mu_1=\frac{\mu_2 -\mu_1}{2}, $  we deduce that $\mbox{meas}A_1=\mbox{meas}A_2.$ This gives  the convexity of  $B_\sigma$.

\bigskip

{\em Step 2:  Layer cake representation and two-sides rearrangements} 

\medskip

Let $f$ and $q$ be two nonnegative functions in $ L^1(\Omega)$,  and  let $q^{*\sigma}$ be the symmetrization of $q$ with respect to the  function $\sigma$ as given by definition \ref{def1}. We write
\be
\label{bath}\begin{array}{ll} \ds   \int_\Omega \sigma(x) (f(x)-q^{*\sigma}(x))dx & \ds = \int_{0}^{+\infty} \left( \int _{x\in\Omega; f(x)>t} \sigma(x)dx-  \int _{x\in\Omega; q^{*\sigma}(x)>t} \sigma(x) dx\right) dt \\
 &\ds = \int_{0}^{+\infty} \left( \int _{D_1(t)} \sigma(x)  dx -  \int _{D_2(t)} \sigma(x) dx\right) dt,
              \end{array} \ee
 where
  \be
 \label{D1D2}
 D_1(t)= \{x\in \Omega; q^{*\sigma}(x)\leq t< f(x)\}, \qquad  D_2(t)= \{x\in \Omega;f(x)\leq t<q^{*\sigma}(x)\}.
 \ee
 Let
\be
\label{alpha-beta}
\alpha(t) = \mbox{meas} D_1(t),  \qquad  \beta(t)= \mbox{meas}D_2(t).
\ee
For any given $t>0$, we now claim that 
\be
\label{new-equi1}\alpha(t)= \mbox{meas}\{x\in \Omega;   q^{*\sigma}(x)\leq t\ \mbox{and}\  a_\sigma(\sigma(x)) < \mu_q(t)+ \alpha(t)\}.
\ee
This claim will be obtained by rearranging the set $D_1(t)$ "from the outside", that is rearranging it into an external ring (located between tow level sets of $\sigma$) of the same measure. Indeed, if $q^{*\sigma}(x) > t $, which means $q^\sharp(a_\sigma(\sigma(x))) > t $, then from the definition of $q^\sharp$ and $\mu_q$ we get
$a_\sigma(\sigma(x) )< \mu_q(t)$. We  deduce that
$$\{x\in \Omega;  q^{*\sigma}(x) > t \} \subset \{x\in \Omega;  a_\sigma (\sigma(x)) < \mu_q(t)+ \alpha(t)\},$$
and then
$$\begin{array}{l} \mbox{meas}\{x\in \Omega;  q^{*\sigma}(x) \leq t \ \mbox{and}\  a_\sigma (\sigma(x)) <  \mu_q(t)+ \alpha(t)\}= \\ 
\hspace{3cm} \mbox{meas}\{x\in \Omega;  a_\sigma (\sigma(x)) < \mu_q(t)+ \alpha(t)\} - \mbox{meas}\{x\in \Omega;  q^{*\sigma}(x) > t \}.\end{array}$$
But from assertion {\em i)} of Lemma \ref{lem_atheta}  we have $a_\sigma (\sigma(x)) < \mu_q(t)+ \alpha(t)\implies 
\sigma(x) <e_{max}$. Therefore
using the equimeasurability of $q$ and $q^{*\sigma}$ and assertions {\em ii)} and {\em iii)} of Lemma \ref{lem_atheta}  we get 
$$\mbox{meas}\{x\in \Omega:  q^{*\sigma}(x) \leq t \ \mbox{and}\  a_\sigma (\sigma(x)) < \mu_q(t)+ \alpha(t)\}=a_\sigma \circ b_\sigma (\mu_q(t)+ \alpha(t))- \mu_q(t)= \alpha(t),$$
which proves the claim \fref{new-equi1}.
Now we show similarly that we have
\be
\label{new-equi2}\beta(t)= \mbox{meas}\{x\in \Omega;  q^{*\sigma}(x) > t \ \mbox{and}\  a_\sigma(\sigma(x)) \geq  \mu_q(t)- \beta(t) \},
\ee
(note that  $\beta(t)\leq \mu_q(t)$). This will be obtained by rearranging the set $D_2(t)$ by an internal ring of the same measure (inside rearrangement). Indeed we observe that
$$\{x\in \Omega;  a_\sigma (\sigma(x)) < \mu_q(t)- \beta(t)\}  \subset \{x\in \Omega;  q^{*\sigma}(x) > t \}.$$
Thus
$$ \begin{array}{l} \mu_q(t) - \mbox{meas}\{x\in \Omega;  a_\sigma (\sigma(x)) < \mu_q(t)- \beta(t)\} = \\ 
\hspace{3cm} \mbox{meas}\{x\in \Omega;  q^{*\sigma}(x) > t \ \mbox{and}\  a_\sigma(\sigma(x)) \geq  \mu_q(t)- \beta(t) \},\end{array}$$
which gives (similarly as above) 
$$\mbox{meas}\{x\in \Omega;  q^{*\sigma}(x) > t \ \mbox{and}\  a_\sigma(\sigma(x)) \geq  \mu_q(t)- \beta(t) \}=
\mu_q(t) - a_\sigma\circ b_\sigma(\mu_q(t)- \beta(t))= \beta(t).$$
This ends the proof of claim \fref{new-equi2}.

Now let us return back to formula \fref{bath} and estimate each of the two involved integral terms.
First, we claim that
\be
\label{monotonie1}\forall t>0, \ \ \ \  \int _{q^{*\sigma}\leq t<f} \sigma(x)  dx \geq \ \int _{q^{*\sigma}(x) \leq t , \  a_\sigma(\sigma(x)) <\mu_q(t)+ \alpha(t) } \sigma(x) dx.
\ee
To see this, we set $D_1(t)=\{x\in \Omega; q^{*\sigma}(x)\leq t<f(x)\} $, $\tilde D_1(t)= \{x\in \Omega; q^{*\sigma}(x) \leq t \  \mbox{and} \  a_\sigma(\sigma(x)) < \mu_q(t)+ \alpha(t) \}$, and write
$$\int _{D_1(t)}\sigma(x) dx =   \int _{\tilde D_1(t)} \sigma(x) dx + \int _{D_1(t)\backslash \tilde D_1(t)} \sigma(x) dx - \int _{\tilde D_1(t)\backslash D_1(t)} \sigma(x) dx.$$
Let  $x \in D_1(t)\backslash \tilde D_1(t)$ and $y \in \tilde D_1(t)\backslash D_1(t)$. We have $a_\sigma(\sigma(x)) \geq \mu_q(t)+ \alpha(t)$ and $a_\sigma(\sigma(y)) < \mu_q(t)+ \alpha(t)$. Thus $a_\sigma(\sigma(y)) <a_\sigma(\sigma(x))$, which  implies $\sigma(x)>\sigma(y).$
Therefore for all $y\in \tilde D_1(t)\backslash D_1(t)$
$$  \int _{D_1(t)\backslash \tilde D_1(t)} \sigma(x) dx\ds \geq \sigma(y) \mbox{meas}\left(D_1(t)\backslash \tilde D_1(t)\right),$$
which, integrated with respect to $y\in \tilde D_1(t)\backslash D_1(t)$, yields
$$\mbox{meas}\left( \tilde D_1(t)\backslash D_1(t)\right)\int _{D_1(t)\backslash \tilde D_1(t)} \sigma(x) dx \geq 
 \mbox{meas}\left(D_1(t)\backslash \tilde D_1(t)\right)\int _{\tilde D_1(t)\backslash  D_1(t)} \sigma(y) dy.$$
Now the equimeasurability property \fref{new-equi1}(outside rearrangement) ensures that
$\mbox{meas}D_1(t)=\mbox{meas}\tilde D_1(t)$, and therefore  $\mbox{meas}\left( \tilde D_1(t)\backslash D_1(t)\right)=
\mbox{meas}\left(D_1(t)\backslash \tilde D_1(t)\right).$  Thus, the ongoing inequality clearly implies the claim (\ref{monotonie1}).

As above, we  also claim that
\be
\label{monotonie2}\forall t>0, \ \ \ \  \int _{f\leq t<q^{*\sigma}} \sigma(x)  dx \leq \ \int _{a_\sigma(\sigma(x))\geq \mu_q(t) -\beta(t), \  q^{*\sigma}(x)>t } \sigma(x)  dx,
\ee
 the proof of which is  similar to that of (\ref{monotonie1}).
 
 Injecting inequalities (\ref{monotonie1}) and (\ref{monotonie2}) into (\ref{bath}) yields
 $$\begin{array}{l}\ds  \int \sigma(x) (f(x)-q^{*\sigma}(x))dx \geq  \\ \hspace{1.5cm}\ds  \int_{0}^{+\infty} dt \left(\int _{\scriptsize \begin{array}{l}q^{*\sigma}(x) \leq t,\\ a_\sigma(\sigma(x)) <\mu_q(t)+\alpha(t)\end{array}} \sigma(x)dx-  \int _{\scriptsize \begin{array}{l}q^{*\sigma}(x) > t,\\ a_\sigma(\sigma(x)) \geq \mu_q(t)-\beta(t)\end{array}} \sigma(x)dx\right).
 \end{array}$$
   Since     $ \mu_q(t) \leq s \Leftrightarrow q^\sharp(s) \leq t$, we get
   $$\begin{array}{l}\ds  \int \sigma(x) (f(x)-q^{*\sigma}(x))dx \geq  \\ \hspace{1.5cm}\ds  \int_{0}^{+\infty} dt \left(\int _{\scriptsize  \mu_q(t)\leq a_\sigma(\sigma(x)) <\mu_q(t)+\alpha(t)} \sigma(x)dx-  \int _{\scriptsize  
   \mu_q(t)-\beta(t) \leq a_\sigma(\sigma(x))<\mu_q(t)}\sigma(x)dx\right).
 \end{array}$$

In terms of the function $B_\sigma$ defined by \fref{Btheta}, this writes
\be
 \label{base0}
 \int \sigma(x) (f(x)-q^{*\sigma}(x))dx\geq \ds \int_{0}^{+\infty} \left[ B_\sigma(\mu_q(t) + \alpha(t)) + B_\sigma(\mu_q(t) - \beta(t)) - 2B_\sigma (\mu_q(t)) \right]dt.
 \ee
 
 \medskip
 
 {\em Step 3: convexity estimates:}
 
 \medskip

 We rewrite \fref{base0} in the following form
\be
 \label{base1}
 \begin{array}{ll}
\ds  \int \sigma(x) (f(x)-q^{*\sigma}(x))dx &\geq \ds \int_0^{\|q\|_{L^{\infty}}} \left[ B_\sigma(\mu_q(t) + \beta(t)) + B_\sigma(\mu_q(t) - \beta(t)) - 2B_\sigma (\mu_q(t)) \right]dt \\ 
&\hspace{1cm} + \ds  \int_{0}^{+\infty}  \left[ B_\sigma (\mu_q(t) + \alpha(t)) - B_\sigma (\mu_q(t) +\beta(t))  \right]dt,\\&=\ds 
T_1+ T_2.
 \end{array}
\ee  
where we have restricted the integration domain to $]0,{\|q\|_{L^{\infty}}}[$ in the first integral of the rhs term, since $\beta(t) =0$ for $t\geq \|q\|_{L^{\infty}}$.
Note that  $\|q\|_{L^{\infty}}$ may be infinite.
We start by estimating from below the term 
$$T_2=  \int_{0}^{+\infty}  \left[ B_\sigma (\mu_q(t) + \alpha(t)) - B_\sigma (\mu_q(t) +\beta(t))  \right]dt.$$ 

To this aim, we observe first that the convex function $B_\sigma$ satisfies
\be \label{bla-con} B_\sigma(\mu_1) -B_\sigma(\mu_2) \geq (\mu_1-\mu_2) b_\sigma (\mu_2), \ \ \forall 0\leq \mu_1, \mu_2 <\mbox{meas}(\Omega).
\ee
Indeed, assume $\mu_1\geq \mu_2$ and write (using in particular \fref{alkalam} and \fref{equiv-btheta}) 
$$\begin{array}{lll} \ds B_\sigma(\mu_1) -B_\sigma(\mu_2) &= &\ds \int_{\mu_2< a_\sigma(\sigma(x))\leq \mu_1 } \sigma(x) dx\\
&=& \ds \int_{b_\sigma(\mu_2)< \sigma(x)\leq b_\sigma(\mu_1) } \sigma(x) dx \\
&\geq &\ds  b_\sigma(\mu_2) (a_\sigma(b_\sigma(\mu_1) )- a_\sigma(b_\sigma(\mu_2) )= b_\sigma(\mu_2) \mu_1-\mu_2).
\end{array}$$
and similarly the same holds if $\mu_1\leq \mu_2$.  Using \fref{bla-con} we then get
$$T_2\geq\int_{0}^{+\infty} (\alpha(t)-\beta(t))  b_\sigma(\mu_q(t)+\beta(t))dt.$$
Now we observe that 
\be
\label{alpha-beta*}
\beta(t) -\alpha(t)= \mu_q(t) - \mu_f(t) = \mu_{q^*}(t) - \mu_{f^*}(t) = \beta_*(t)-\alpha_*(t),
\ee
with
$$
\alpha_*(t) = \mbox{meas}\{x\in\Omega; q^*(x)\leq t< f^*(x)\},  \qquad  \beta_*(t)= \mbox{meas}\{x\in\Omega; f^*(x)\leq t<q^*(x)\}.
$$
Therefore, using the monotonicity of $b_\sigma$ and the fact that $0\leq \beta(t)\leq \mu_q(t)$ , we get
$$T_2\geq  \int_{0}^{+\infty}(\alpha_*(t)-\beta_*(t)) b_\sigma(\mu_q(t)+\beta(t))dt
 \geq \int_{0}^{+\infty}\left[b_\sigma(\mu_q(t))\alpha_*(t) - b_\sigma(2\mu_q(t))\beta_*(t)\right] dt$$
In particular inequality \fref{ineq11} is proved.
To end the proof of inequality \fref{ineq1}, it remains
to estimate from below the term 
$$T_1=  \int_0^{\|q\|_{L^{\infty}}} \left[ B_\sigma(\mu_q(t) + \beta(t)) + B_\sigma(\mu_q(t) - \beta(t)) - 2B_\sigma (\mu_q(t)) \right]dt.$$
Using the definition of the function $H_\sigma(\mu)$ given in Theorem \ref{thm1} and observing that $\beta(t)\leq \mu_q(t) $, we get
$$T_1 \geq  \ds \int_{0}^{{\|q\|_{L^{\infty}}} } H_\sigma(\mu_q(t) )\beta(t)^2 dt $$
By Cauchy-Schwarz (or Jensen) inequality we then get 
 \be
 \label{base2}
T_1\geq   \left({\int_0^{\|q\|_{L^{\infty}}} \frac{dt}{H_\sigma(\mu_q(t) )}} \right)^{-1}\left(  \int_{0}^{+\infty}\beta(t) dt\right)^2.
\ee 
Now to estimate $ \int_0^{+\infty} \beta(t) dt $, we first observe that
$$\int_0^{+\infty} \beta(t)dt =\int_{\Omega}(f(x) - q^{*\sigma} (x))_+ dt,  \quad\mbox{and} \quad \int_0^{\infty} \alpha(t)dt =\int_{\Omega}  (q^{*\sigma}(x) -f(x))_+ dx,$$
which implies 
\be
\label{beta-p-alpha}\int_0^{\infty} (\alpha(t)+\beta(t))dt =\int_{\Omega} 
\left( (q^{*\sigma}(x) -f(x))_+ +(f(x) - q^{*\sigma} (x))_+\right) dx= \|f-q^{*\sigma} \|_{L^1}.
\ee
Note also that from \fref{alpha-beta*} we have
\be
\label{beta-m-alpha}
\ds \int_0^{\infty} (\beta(t)-\alpha(t))dt \ds =\int_0^{\infty} (\beta_*(t)-\alpha_*(t))dt   \ds = \int \left(q^*(x)-f^* (x)\right)dx = \|q\|_{L^1}-\|f\|_{L^1}.
\ee
Summing \fref{beta-p-alpha}and  \fref{beta-m-alpha}  we get
$$
\int_0^{+\infty}\beta(t)dt  = \frac{1}{2} \left(  \|f-q^{*\sigma} \|_{L^1} +\|q\|_{L^1}-\|f\|_{L^1} \right),
$$
and use  identities \fref{beta-p-alpha} and  \fref{beta-m-alpha} to estimate $ \int_0^{+\infty} \beta(t) dt $.
We now report the obtained estimate into \fref{base2} to get
 \be
 \label{base3}
T_1 \geq  \left(4 \int_0^{\|q\|_{L^{\infty}}} \frac{dt}{H_\sigma(\mu_q(t) )}\right)^{-1}  \left( \|f-q^{*\sigma} \|_{L^1} +\|q\|_{L^1}-\|f\|_{L^1} \right)^2 .
\ee 
Reporting \fref{base2} and \fref{base3}  into \fref{base1} ends the proof of  the inequality \fref{ineq1} in Theorem \ref{thm1}. Finally, inequality \fref{ineq2}
is directly obtained from \fref{ineq1}  by simply taking $q=f$.

\subsection{Proof of Corollary \ref{cor1}}
\label{proof-cor1}
To prove Corollary \ref{cor1}, we just apply theorem 1 with $\sigma(x)=|x|^m, \ \forall x\in\RR^d.$
In this case we have
$$a_\sigma(s) =K_d s^{d/m},  \quad b_\sigma (s)=a_\sigma^{-1}(s)= K_d^{-m/d} s^{m/d}, \quad   B_\sigma(s)= \frac{d}{m+d} K_d^{-m/d}s^{1+m/d} .$$
An elementary computation shows that
$$\begin{array}{ll} H_\sigma(\mu) &\ds = \inf_{0<s<\mu} \frac{B_\sigma (\mu +s)+ B_\sigma(\mu-s)-2B(\mu)}{s^2} \\
&\ds =\frac{d}{m+d} K_d^{-m/d}  \inf_{0<s<\mu}  \frac{(\mu +s)^{1+m/d}+ (\mu-s)^{1+m/d} -2\mu^{1+m/d}}{s^2}\\
&\ds =\frac{d}{m+d} K_d^{-m/d} \mu^ {-1+m/d} \inf_{0<s<1}    \frac{(1 +s)^{1+m/d}+ (1-s)^{1+m/d} -2}{s^2}\\\
\end{array}$$
An elementary analysis of the function $s\mapsto (1 +s)^{1+m/d}+ (1-s)^{1+m/d} -2 - \frac{m}{d} \left( 1+ \frac{m}{d}\right) s^2$
reveals that (for $0\leq m\leq d$)
$$  \inf_{0<s<1}    \frac{(1 +s)^{1+m/d}+ (1-s)^{1+m/d} -2}{s^2}=  \frac{m}{d} \left( 1+ \frac{m}{d}\right),$$
and then 
$$ H_\sigma(\mu) =\frac{m}{d} K_d^{-m/d} \mu^ {m/d-1}.$$
 Thus
$$ \int_0^{+\infty} \frac{1}{H_\sigma(\mu_f(s))} ds =\frac{d}{m} K_d^{m/d} \int_0^{\|f\|_{L^\infty}}\mu_f(s)^ {1-m/d}ds,  $$
where we have used that$\mu_f(s)=0$ for $s>\|f\|_{L^\infty}$.
Now, we use the concavity of the function $\mu\rightarrow \mu^ {1-m/d}$ (since $0\leq m\leq d$) and get by Jensen inequality

$$ \int_0^{+\infty} \frac{1}{H_\sigma(\mu_f(s))} ds \leq \frac{d}{m} K_d^{m/d} \|f\|_{L^\infty}^{m/d} \left(\int_0^{\|f\|_{L^\infty}}\mu_f(s)ds \right)^ {1-m/d}.$$
Observing that $\ds \int_0^{\|f\|_{L^\infty}}\mu_f(s)ds = \|f\|_{L^1}$, we obtain
$$ \int_0^{+\infty} \frac{1}{H_\sigma(\mu_f(s))} ds \leq \frac{d}{m} K_d^{m/d} \|f\|_{L^\infty}^{m/d}\|f\|_{L^1}^{1-m/d}.$$
Reporting this into inequality \fref{ineq2} of Theorem \ref{thm1}  finally yields the desired inequality \fref{bathtub-classic}.
This ends the proof of Corollary \ref{cor1}.

\section{Quantitative stability inequalities for Valsov-Poisson and 2D Euler systems}
\subsection{Stability inequality for the Vlasov-Poisson system: proof of Theorem \ref{thm-VPG}}
\label{sectionVPP}
This section is devoted to the proof of Theorem \ref{thm-VPG}. We shall apply Theorem \ref{thm1} with  $\Omega=\RR^d, d=6.$   Let $f_0$  be a non-zero compactly supported and radial steady state of the Vlasov-Poisson system, which is a decreasing  function of the microscopic energy:
\be
\label{theta-vlasov}f_0(x,v)= F(e_{0}(x,v)),  \qquad e_{0}(x,v)= |v|^2/2 + \phi_{f_0}(x) - \phi_{f_0}(0),
\ee
where $F$ is a a continuous and decreasing function. The function $e_0$ will play the role of the function $\sigma$ used in Theorem \ref{thm1}. Since $\phi_{f_0} $ is radial and satisfies the Poisson equation,  it is known that  $\phi_{f_0}$ is a negative, continuous, and radially increasing function. This implies that $\phi_{f_0}(x) \geq \phi_{f_0}(0)$, and therefore the function $e_0$ is nonnegative.
To apply Theorem \ref{thm1}, we need to express the Jacobien $a_{e_0}$ and the generalized symmetrization
$f^{*e_{0}}$. These expressions have been established and used in \cite{LMR}. We have

$$a_{e_0}(s)= \frac{8\pi \sqrt{2}}{3} \int_{\RR^3}  \left( s+\phi_{f_0}(0) -\phi_{f_0}(x)\right)_+^{3/2} dx.$$
$$f^{*e_{0}}(x,v) = f^{\sharp} \circ a_{e_0}(e_0(x,v)),$$
where we recall that $f^\sharp$ is the pseudo-inverse of $\mu_f$.
From \cite{LMR}, we know that $a_{e_0}$ is a $C^1$-diffeomorphism from $[0, -\phi_{f_0}(0)[$ to $[0, +\infty[$, that $a_ {e_0}(s)=0$ for all $s\leq0$, and that $a_{e_0}(0)=a_{e_0}'(0)=0$ (note that here the definition of $a_ {e_0}$ only differs from that given in  \cite{LMR} by a constant). Here we have $e_{min}=0, e_{max}= -\phi_{f_0}(0)$ and it is easy to check that
$$ \mbox{meas}\{ (x,v)\in \RR^6:  e_0(x,v)= \lambda\}=0, \quad \forall \lambda < -\phi_{f_0}(0).$$ 
Now, from Lemma \ref{lem_atheta}, we know that the function $B_{e_0}(\mu)$  given by \fref{Btheta} is well defined and has the following expression. 
\be 
B_\sigma(s)= \int_0^\mu a_{e_0}^{-1}(s) ds
\ee
Consequently, we are allowed to apply inequality \fref{ineq1} of Theorem \ref{thm1}, with $f \in{\mathcal E}$ and $q=f_0$. 
Noting that $f_0^{*e_0} = f_0$  and that
$$ \|f\|_{L^1} - \|f_0\|_{L^1} = \|f^*\|_{L^1}  -    \|f_0^*\|_{L^1} \leq \| f^* -    f_0^*\|_{L^1}\leq  \| f -    f_0\|_{L^1},$$ we get
\be
\label{inequalityH1}
\begin{array}{ll}
\left(\|f-f_0\|_{L^1}- \|f^*-f_0^*\|_{L^1}\right)^2    \leq &\ds  K(f_0^*, \phi_{f_0})\left[  \ \int_{\RR^6}  \left(|v|^2/2 + \phi_{f_0}(x) - \phi_{f_0}(0)\right) (f- f_0) dx dv +\right. \\ &\ds \left.  \hspace{2cm}+ \int_{0}^{+\infty} a_{e_0}^{-1}(2\mu_{f_0} (s)) \beta_{f^*,f_0^*}(s)ds \right],
  \end{array} 
\ee
where $\beta_{f^*,f_0^*}(s) $ is defined by \fref{betafg} and where the constant $K(f_0^*, \phi_{f_0})$ (which will be shown to be finite below) is given by \fref{Kq*} with $\mu_{f_0}(s)= \mbox{meas}\{(x,v)\in\RR^6; f_0(x,v)>s\}$. 
Since  $0\leq a_{e_0}^{-1}(s)\leq -\phi_{f_0}(0), \ \forall s \geq0$, and 
$$ \int_{0}^{+\infty} \beta_{f^*,f_0^*}(s)ds =\int (f_0^*-f^*)_+ dxdv \leq \|f^*-f_0^*\|_{L^1},$$ we deduce that
inequality \fref{inequalityH1} implies
\be
\label{inequalityH2}
\begin{array}{ll}
\left(\|f-f_0\|_{L^1}- \|f^*-f_0^*\|_{L^1}\right)^2    & \leq   \ds    K(f_0^*, \phi_{f_0})\ds  \left[ \int_{\RR^6}  \left(|v|^2/2 + \phi_{f_0}(x)\right) (f- f_0) dx dv\right.  \\
&\hspace{4cm}\left. \ds +{\color{white} \int } 2 |\phi_{f_0}(0)|\|f^*-f_0^*\|_{L^1}\right] \\
 \end{array}
\ee
where we have used that $\int (f-f_0) dxdv = \int (f^*-f_0^*) dxdv \leq \|f^*-f_0^*\|_{L^1}$.

 Now we shall prove that $ K(f_0^*, \phi_{f_0})$ is finite. We recall that
$$K(f_0^*, \phi_{f_0}) =4 \int_0^{\|f_0\|_{L^{\infty}}} \frac{dt}{H_{e_0}(\mu_{f_0}(t))} ,
 $$
 where
 $$H_{e_0}(\mu) = \inf_{0<s\leq\mu}  \frac{B_{e_0}(\mu +s)+ B_{e_0}(\mu-s)-2B_{e_0}(\mu)}{s^2}, \quad \forall \mu>0,$$
 and where $B_{e_0}(s) = \int_0^s a_{e_0}^{-1}(\tau) d\tau.$ We observe that  $a_{e_0}$ is an increasing convex function and then 
$a_{e_0}'\circ a_{e_0}^{-1}(\mu_{f_0}(t)-s)\leq  a_{e_0}'\circ a_{e_0}^{-1}(\mu_{f_0}(t)) \leq a_{e_0}'\circ a_{e_0}^{-1}(\mu_{f_0}(0))= a_{e_0}'\circ a_{e_0}^{-1}(\mbox{meas}(\mbox{Supp}(f_0)))$. Therefore, using a Taylor expansion
$$\begin{array}{ll}\ds H_{e_0}(\mu_{f_0}(t)) &=  \ds \inf_{0<s\leq\mu_{f_0}(t)}  \frac{B_{e_0}(\mu_{f_0}(t) +s)+ B_{e_0}(\mu_{f_0}(t)-s)-2B_{e_0}(\mu_{f_0}(t))}{s^2} \\ & \ds = \inf_{0<s\leq\mu_{f_0}(t)}\int_0^1 (1-\lambda ) \left( \left(a_{e_0}^{-1}\right)' (\mu_{f_0}(t) + \lambda s) 
+\left(a_{e_0}^{-1}\right)' (\mu_{f_0}(t) - \lambda s)\right) d\lambda,\\ & \ds  \geq  \frac{1}{ 2 a_{e_0}'\circ a_{e_0}^{-1}(\mbox{meas}(\mbox{Supp}(f_0))},
\end{array}$$
 and then
$$K(f_0^*, \phi_{f_0}) \leq 8\, \|f_0\|_{L^{\infty}}\, a_{e_0}'\circ a_{e_0}^{-1}(\mbox{meas}(\mbox{Supp}(f_0)) <\infty. $$
We then report the following elementary identity
$$ {\mathcal H}(f) - {\mathcal H}(f_0) = \int_{\RR^6}  \left(\frac{|v|^2}{2} + \phi_{f_0}(x)\right) (f- f_0) dx dv -\frac{1}{2} \|\nabla \phi_f -\nabla \phi_{f_0}\|_{L^2}^2$$
into \fref{inequalityH2}, and get 
\be
\label{inequalityH3}
\begin{array}{ll}\left(\|f-f_0\|_{L^1}- \|f^*-f_0^*\|_{L^1}\right)^2   \leq   \ds    K(f_0^*, \phi_{f_0})& \ds  \left[ {\mathcal H}(f) - {\mathcal H}(f_0) +  \frac{1}{2} \|\nabla \phi_f -\nabla \phi_{f_0}\|_{L^2}^2  \right. \\  &\ds  \left.
\hspace{1.5cm} {\color{white} \frac{1}{2}}+2 |\phi_{f_0}(0)\|f^*-f_0^*\|_{L^1}\right] 
\end{array}
\ee
 which directly implies the global control \fref{ineq-grav-glob} in Theorem \ref{thm1}. So far we have obtained a {\em global control} of  $\|f-f_0\|_{L^1}$ by the relative energy ${\mathcal H}(f) - {\mathcal H}(f_0) $, the relative rearrangements $ \|f^*-f_0^*\|_{L^1} $ and the relative potential
$\|\nabla \phi_f -\nabla \phi_{f_0}\|_{L^2}^2$.   Now we shall prove the local control
\fref{ineq-grav-loc}. The control  of $\|\nabla \phi_f -\nabla \phi_{f_0}\|_{L^2}^2$ by the relative Hamiltonian has been obtained locally, but quantitatively, in  \cite{LMR} (Proposition 3.1) for perturbations $f$ which are  equimeasurable to $f_0$. Here we just refer to \cite{LMR} for  more details on this control and give the argument that allows to extend this control (in a quantitative way) to perturbations which are not necessarily equimeasurable with the initial data. We first write
(using the equimeasurability of $f$ and $f^{*e_\phi}$)

\be
\label{z1}
\begin{array}{ll}
\ds {\mathcal H}(f) - {\mathcal H}(f_0)&\ds = \int_{\RR^6}  \left(\frac{|v|^2}{2} + \phi(x)\right) (f- f_0) dx dv +\frac{1}{2} \|\nabla \phi -\nabla \phi_{f_0}\|_{L^2}^2\\
& \ds = \int_{\RR^6}  e_\phi (x,v)(f- f^{*e_\phi}) dx dv  + \int_{\RR^6}  (e_\phi(x,v)-\|\phi\|_{L^\infty}) (f^{*e_\phi}- f_0^{*e_\phi}) dx dv \\
&\ds 
+\int_{\RR^6}  e_\phi(x,v) (f_0^{*e_\phi}- f_0) dx dv+ \frac{1}{2} \|\nabla \phi -\nabla \phi_{f_0}\|_{L^2}^2,
\end{array}
\ee
where, for shortening, we have denoted $\phi=\phi_f$ and where
$$ e_\phi(x,v) =  |v|^2/2 + \phi(x) + \|\phi\|_{L^\infty} ,\qquad a_{e_\phi}(s)= \frac{8\pi \sqrt{2}}{3} \int_{\RR^3}  \left( s-\|\phi\|_{L^\infty} -\phi(x)\right)_+^{3/2} dx, \quad \mbox{and}$$
$$f^{*e_{\phi}}(x,v) = f^{\sharp} \circ a_{e_\phi}(e_\phi(x,v)).$$
We recall that $a_{e_\phi}(s)= \mbox{meas}\{ (x,v)\in \RR^6, e_\phi(x,v) <s\}$, and that from \cite{LMR}, $a_{e_\phi}$ is a continuous  and strictly increasing function from $[0,\|\phi\|_{L^\infty}[$ to
$[0,+\infty[$.
We have
$$\begin{array}{ll}\ds  \int_{\RR^6}  e_\phi(x,v) f^{*e_\phi}dx dv &\ds = \int_{\RR^6}  \left(\int_0^{f^{*e_\phi}(x,v)} dt\right) 
e_\phi(x,v)dx dv \\ &=\ds \int_0^{+\infty}\left( \int_{f^{*e_\phi}>t} e_\phi(x,v)dx dv\right) dt.
\end{array}$$
Since from \fref{zarby} $\{(x,v): f^{*e_\phi}(x,v)>t\}=\{(x,v): e_\phi(x,v)<a_{e_\phi}^{-1}(\mu_f(t))\}$, we have
$$\begin{array}{ll}\ds  \int_{\RR^6}  e_\phi(x,v) f^{*e_\phi}e_\phi(x,v)dx dv &=\ds \int_0^{+\infty}\left( 
\int_{0\leq e_\phi<a_{e_\phi}^{-1}(\mu_f(t))}e_\phi(x,v) dx dv\right) dt \\
&\ds  = \int_0^{+\infty} B_{e_\phi}(\mu_f(t)) dt = \int_0^{+\infty}  \left( \int_{0}^{\mu_f(t)}a_{e_\phi}^{-1}(s) ds \right) dt \\
&\ds = \int_0^{+\infty}  \left( \int_{\mu_f(t)>s} dt  \right) a_{e_\phi}^{-1}(s) ds\\
&\ds = \int_0^{+\infty}  f^\sharp (s)  a_{e_\phi}^{-1}(s) ds,
\end{array}$$
where we have used identity \fref{Btheta-simple} of Lemma \ref{lem_atheta}, and  the fact that  $\mu_f(t)>s\Leftrightarrow t<f^\sharp (s), \forall t,s \geq 0, $. We then deduce that
$$\begin{array}{ll}\left | \ds  \int_{\RR^6}  (e_\phi(x,v)-\|\phi\|_{L^\infty}) (f^{*e_\phi}- f_0^{*e_\phi}) dx dv \right |&\ds =\left | \int_0^{+\infty}  \left( f^\sharp (s) - f_0^\sharp (s) \right)(a_{e_\phi}^{-1}(s)-\|\phi\|_{L^\infty}) ds\right |\\ & \ds\leq  
\|\phi\|_{L^\infty} \|f^\sharp  - f_0^\sharp\|_{L^1(\RR)}=  \|\phi\|_{L^\infty} \|f^*  - f_0^*\|_{L^1(\RR^d)},
\end{array}$$
where we have used $a_{e_\phi}^{-1}(s)\leq\|\phi\|_{L^\infty}$.
We now report this estimate  in \fref{z1} to get
\be
\label{z2}
 {\mathcal H}(f) - {\mathcal H}(f_0)\geq - \|\phi\|_{L^\infty} \|f^*  - f_0^*\|_{L^1} + {\mathcal J} (\phi)-{\mathcal J} (\phi_{f_0}),
\ee
where
\be \label{expJ} {\mathcal J} (\phi)= \int_{\RR^6}  e_\phi(x,v) f_0^{*e_\phi}dx dv+ \frac{1}{2} \|\nabla \phi\|_{L^2}^2,
\ee
and where we have used the inequality 
 $\int_{\RR^6}  e_\phi (x,v)(f- f^{*e_\phi}) dx dv\geq 0$ which is a direct consequence of Theorem \ref{thm1}.
Finally, we combine this estimate \fref{z2} with the result of Proposition 3.1 in \cite{LMR}, and use assumption \fref{local-assumption} to get the local estimate \fref{ineq-grav-loc} of Theorem \ref{thm-VPG}.

\subsection{Stability inequalities for 2D Euler equation}
\subsubsection{Proof of Theorem \ref{thm-Euler1}}
\label{sectionEuler1P}
- {\em Proof of the stability inequality \fref{ineq-E2D-radial}}.   We will apply Theorem \ref{thm1} with 
$$\Omega = B(0,R), \ \mbox{and} \  \sigma(x)=|x|^2, \quad \forall x\in \Omega.$$
We have
$$a_\sigma (e) = \mbox{meas}\{x\in B(0,R); |x|^2< e\}= \pi \mbox{min}(e,R^2), \quad \forall e\geq 0.$$
The right inverse $b_\sigma$ is then defined on $[0,\pi R^2[$ by
$b_\sigma(s)= \frac{s}{\pi}.$ This gives
$$B_\sigma (s) = \int_0^s b_\sigma (\tau) d\tau= \frac{s^2}{2 \pi}, \mbox{and}$$
$$H_\sigma(\mu)= \inf_{0<s\leq \mu} \frac{B_\sigma (\mu+s)+B_\sigma (\mu-s)-2B_\sigma (\mu)}{s^2}=\frac{1}{\pi}.$$
We then obtain that the constant \fref{Kq*} is indeed finite for all $q\in L^\infty$:
$$K(q^*, \sigma)= 4\int_0^{\|q\|_{L^\infty}}  \frac{dt}{H_\sigma(\mu_q(t))}= 4\pi \|q\|_{L^\infty}. $$
We now consider a steady state $q$ of the form \fref{steady-radial}
and apply inequality \fref{ineq1} to a weak solution $\omega(t)$ to the 
2D Euler equation:
$$  \begin{array}{ll}
\ds \left( \|\omega(t)-q\|_{L^1(\Omega)}+\|q\|_{L^1}-\|\omega(t)\|_{L^1} \right)^2 \leq &\ds  4\pi{\|q\|_{L^\infty}} \left[ \int_{\Omega} |x|^2(\omega(t,x)-q (x)) dx\right. \\ 
\ds &\ds \hspace{1.5cm} \left.  +\frac{2}{\pi} \int_0^{+\infty} \mu_q(s) \beta_{\omega_{in}^*,q^*}(s) ds\right],
\end{array}
$$
Note that here $q=q^*=q^{*\sigma}.$ Inequality \fref{ineq-E2D-radial} is therefore a direct consequence
of the ongoing inequality and of the conservation properties of the solution
$$\omega(t)^*= \omega_{in}^*, \  \mbox{and}\  \int_{\Omega} |x|^2\omega(t,x)dx \leq \int_{\Omega} |x|^2\omega_{in}(x)dx,$$
where $\omega_{in}=\omega(0,x).$
Now to get \fref{ineq-E2D-radial-c}  from \fref{ineq-E2D-radial}  in case of comapctly supported steady states $q$, we just observe
that $\mu_q(s)\leq \mbox{meas}(\mbox{Supp}(q))$ and that
$\int_0^{+\infty} \beta_{\omega^*,q^*}(s) ds \leq \|\omega^*-q^*\|_{L^1(\Omega)}.$

- {\em Proof of the stability inequality \fref{ineq-E2D-rectangular}}.  
Consider now the following domain
$$\Omega=]0,L_1[\times ]0,L_2[, \quad \sigma (x)=x_2, \quad \forall x=(x_1,x_2)\in \Omega.$$
We have
$$a_\sigma (e) = \mbox{meas}\{x\in \Omega; x_2< e\}= L_1 \mbox{min}(e,L_2), \quad \forall e\geq 0.$$
The right inverse $b_\sigma$ is then defined on $[0,L_1L_2[$ by
$b_\sigma(s)= \frac{s}{L_1}.$ This gives
$$B_\sigma (s) = \int_0^s b_\sigma (\tau) d\tau= \frac{s^2}{2 L_1}, \quad \mbox{and}$$
$$H_\sigma(\mu)= \inf_{0<s\leq \mu} \frac{B_\sigma (\mu+s)+B_\sigma (\mu-s)-2B_\sigma (\mu)}{s^2}=\frac{1}{L_1}.$$
We then check that the constant \fref{Kq*} is indeed finite for all $q\in L^\infty$:
$$K(q^*, \sigma)= 4\int_0^{\|q\|_{L^\infty}}  \frac{dt}{H_\sigma(\mu_q(t))}= 4L_1 \|q\|_{L^\infty}. $$
If $q$ is a steady state  of the form \fref{ss-x2}, then we apply inequality \fref{ineq1} to a weak solution $\omega(t)$ to the 
2D Euler equation and get:
$$
\begin{array}{ll}
\ds \left( \|\omega(t)-q\|_{L^1(\Omega)}+\|q\|_{L^1}-\|\omega(t)\|_{L^1} \right)^2 \leq &\ds  4L_1{\|q\|_{L^\infty}} \left[ \int_{\Omega} x_2(\omega(t,x)-q (x)) dx\right. \\ 
\ds &\ds \hspace{1.5cm} \left.  +\frac{2}{L_1}  \int_0^{+\infty} \mu_q(s) \beta_{\omega_{in}^*,q^*}(s) ds\right],
\end{array}
$$
and again inequality \fref{ineq-E2D-rectangular} is a direct consequence of this inequality and of the conservation properties of the solution $\omega(t)$. The case of compactly supported steady state $q$ is also
obtained directly from \fref{ineq-E2D-rectangular} as in the radial case, using $\mu_q(s)\leq \mbox{meas}(\mbox{Supp}(q))$ and 
$\int_0^{+\infty} \beta_{\omega^*,q^*}(s) ds \leq \|\omega^*-q^*\|_{L^1(\Omega)}.$.

\subsubsection{Proof of Theorem \ref{thm-Euler2}}
\label{proof-Euler2}
To prove Theorem \ref{thm-Euler2}, we first observe that
\be
\label{alhadra1} {\mathcal H}(\omega) -{\mathcal H}(\omega_0) = \int_\Omega \psi_0 (\omega-\omega_0) dx +  \frac{1}{2} \int_\Omega |\nabla \psi -\nabla\psi_0(x)|^2 dx,
\ee
where the Hamiltonian $\mathcal H$ is given by \fref{energy-Ham-euler}. We recall that $\psi$ and $\psi_0$ are the stream functions associated 
to $\omega$ and $\omega_0$ respectively, and are nonnegative functions (since $\omega$ and $\omega_0$ are nonnegative). 
 To control the first term of the rhs of equality \fref{alhadra1}, we shall use Theorem \ref{thm1} with $\sigma(x)= \psi_0(x).$ In order to show  that the assumptions of Theorem \ref{thm1}  on $\sigma$ are satisfied, we need the following lemma.
 
 \begin{Lemma}[Level sets of $\psi_0$ are of zero measure]
 \label{lem2}
 Let $F$ be a nonnegative, nonincreasing, and continuous function on $\RR$ such that $F(0) \ne 0.$  Assume that $\psi_0$  satisfies
  $$\Delta \psi_0=-\omega_0 (x)= -F(\psi_0(x))   \ \ on \ \ \Omega, \quad \mbox{and} \ \quad   \psi_0=0 \ \mbox{on}\  \pa \Omega.$$
  Then the level sets of $\psi_0$ are of zero measure: 
  $$\mbox{meas}\{x\in \Omega; \psi_0(x) =\lambda \} =0, \qquad \forall \lambda \in \RR.$$
 \end{Lemma}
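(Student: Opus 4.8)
The plan is to combine two classical ingredients: the fact that, for a Sobolev function, the Laplacian vanishes almost everywhere on each of its level sets, and the maximum principle, which confines $\psi_0$ to the range on which $F$ is positive. First I would record the regularity of $\psi_0$: since $F$ is continuous and $\psi_0$ is bounded (being the stream function of an $L^1\cap L^\infty$ vorticity on a bounded ${\mathcal C}^{2,\alpha}$ domain), the right-hand side $\omega_0=F(\psi_0)$ is bounded, so elliptic regularity gives $\psi_0\in W^{2,p}(\Omega)$ for every $p<\infty$, and in particular $\psi_0\in C^{1}(\overline\Omega)$. Moreover $-\Delta\psi_0=F(\psi_0)\ge 0$ together with $\psi_0=0$ on $\pa\Omega$ yields $\psi_0\ge 0$ by the minimum principle.

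The core step is the Sobolev level-set lemma: if $u\in W^{2,1}_{loc}(\Omega)$ then $\nabla u=0$ a.e. on $\{u=c\}$ for every constant $c$, and applying this once more to each component of $\nabla u$ one gets $\Delta u=0$ a.e. on $\{u=c\}$. Applied to $u=\psi_0$ this shows $\Delta\psi_0=0$ almost everywhere on $\{\psi_0=\lambda\}$. On the other hand the equation gives $\Delta\psi_0=-F(\psi_0)=-F(\lambda)$ a.e. on the same set. Comparing the two, whenever $\mbox{meas}\{\psi_0=\lambda\}>0$ we must have $F(\lambda)=0$. Thus it only remains to control the level sets corresponding to the values $\lambda$ at which $F$ vanishes.

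Since $F$ is nonnegative, continuous and nonincreasing with $F(0)>0$, its zero set is a half-line $[\lambda_0,+\infty)$ with $\lambda_0>0$ (if $F>0$ everywhere the lemma is already finished by the previous paragraph). For $\lambda<\lambda_0$ we have $F(\lambda)>0$, so the level set has measure zero; for $\lambda\le 0$ it is empty because $\psi_0\ge 0$, the value $\lambda=0$ being covered as well since $F(0)>0$. To dispose of the values $\lambda\ge\lambda_0$ I would argue that $\psi_0$ cannot exceed $\lambda_0$: on the open set $U=\{\psi_0>\lambda_0\}$ one has $F(\psi_0)=0$, so $\psi_0$ is harmonic there and equals $\lambda_0$ on $\pa U$, which is contained in $\{\psi_0=\lambda_0\}$ and stays away from $\pa\Omega$ (where $\psi_0=0<\lambda_0$); the maximum principle then forces $\psi_0\le\lambda_0$ on $U$, whence $U=\varnothing$ and $\psi_0\le\lambda_0$ on $\Omega$. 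This already makes $\{\psi_0=\lambda\}$ empty for every $\lambda>\lambda_0$.

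The main obstacle is precisely the single remaining value $\lambda=\lambda_0=\max\psi_0$: here $F(\lambda_0)=0$, so the Sobolev/PDE comparison of the second paragraph is vacuous and the harmonic argument above no longer produces a contradiction. Ruling out $\mbox{meas}\{\psi_0=\lambda_0\}>0$ amounts to excluding a ``dead core'' for the equation $\Delta w=F(\lambda_0-w)$ satisfied by $w=\lambda_0-\psi_0\ge 0$ near its zero set, and this is exactly where the behaviour of $F$ close to $\lambda_0$ becomes decisive. I would establish the strict bound $\psi_0<\lambda_0$ in $\Omega$ through a strong maximum principle / Hopf-type argument (in the spirit of Vázquez's strong maximum principle), which is available under an Osgood-type nondegeneracy of $F$ at $\lambda_0$. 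I expect this top-level set to be the genuinely delicate point of the proof, the remaining values being routine once the level-set lemma and the maximum principle are in place.
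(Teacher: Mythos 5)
Your strategy coincides with the paper's own proof in its first steps: the paper likewise gets $\psi_0\ge 0$ from the maximum principle and invokes the Stampacchia-type level-set lemma (citing Henrot--Pierre, p.~67) to conclude $\Delta\psi_0=0$ a.e.\ on $\{\psi_0=\lambda\}$, so that any level set of positive measure forces $F(\lambda)=0$. Where you diverge is exactly the point you flag as delicate, and you are right to flag it: the paper closes the argument with the single deduction ``since $F$ is nonincreasing and $\lambda\ge 0$, we get $F(0)=0$, a contradiction''. This is a non sequitur. For \emph{nonincreasing} $F$, the facts $F(\lambda)=0$ and $\lambda\ge0$ only yield $F(0)\ge F(\lambda)=0$, which is perfectly compatible with $F(0)>0$; the step would be valid for nondecreasing $F$. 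So the paper's proof silently disposes of precisely the top level $\lambda_0=\min\{t:F(t)=0\}=\max\psi_0$ that your proposal isolates as the ``dead core'' case (your treatment of all other levels --- $F(\lambda)>0$ via the level-set lemma, $\lambda<0$ and $\lambda>\lambda_0$ empty by the maximum principle --- is correct and matches or completes the paper's).

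Moreover, your hesitation to claim the top level without an extra hypothesis is justified: the lemma as stated admits counterexamples, via the classical dead-core phenomenon. Take $F(t)=(\lambda_0-t)_+^q$ with $0<q<1$ (nonnegative, continuous, nonincreasing, $F(0)=\lambda_0^q>0$) and $\Omega=B(0,R)$ a large disc. Writing $w=\lambda_0-\psi_0$, the problem becomes $\Delta w=w^q$ with $w=\lambda_0$ on $\pa\Omega$ and $0\le w\le\lambda_0$. With $p=2/(1-q)>2$, the function $\bar w(x)=p^{-p}\left(|x|-r_0\right)_+^{p}$ is a $C^2$ supersolution ($\Delta\bar w\le \bar w^{\,q}$) vanishing on $B(0,r_0)$, and choosing $R=r_0+p\,\lambda_0^{1/p}$ one has $\bar w=\lambda_0$ on $\pa\Omega$; the solution $w$ (obtained by minimizing the convex functional $\int_\Omega\left(\tfrac12|\nabla w|^2+G(w)\right)dx$ with $G'=g$, $g$ the monotone extension of $s\mapsto s^q$) then satisfies $0\le w\le\bar w$ by comparison, hence $w\equiv0$ on $B(0,r_0)$. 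The resulting $\psi_0=\lambda_0-w$ satisfies every hypothesis of the lemma, yet $\mbox{meas}\{\psi_0=\lambda_0\}\ge \mbox{meas}\,B(0,r_0)>0$. (Consistently, assertion i) of Theorem 4 also fails for this $\psi_0$: $\psi_0^\sharp$ is constant on an interval, so $\Psi_0$ is affine there, not strictly convex.) In short, you have not left a gap so much as located one in the paper: the statement needs an additional nondegeneracy assumption on $F$ at its vanishing point --- e.g.\ local Lipschitz continuity, or the Osgood/V\'azquez condition $\int_0^1 ds/\sqrt{\int_0^s F(\lambda_0-\tau)\,d\tau}=+\infty$, or simply $F>0$ on $[0,\|\psi_0\|_{L^\infty}]$ --- under which your strong-maximum-principle argument gives $\psi_0<\lambda_0$ in $\Omega$ and completes the proof.
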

 \begin{proof}
Assume that there exists $\lambda\in \RR$ such that $\mbox{meas}\{x\in \Omega; \psi_0(x) =\lambda \} >0$. We know from the maximum principle that  $\psi_0$ is nonnegative and therefore $\lambda\geq 0$. From \cite{HMP} (page 67), we also know that $\nabla \psi_0=0$ almost  everywhere on $\{x\in \Omega; \psi_0(x) =\lambda \}$ and recursively $\Delta \psi_0 =0$ almost everywhere on the set $\{x\in \Omega; \nabla\psi_0(x) =0\}$. Therefore $\Delta \psi_0 =0$ almost everywhere on $\{x\in \Omega; \psi_0(x) =\lambda \}$ and then
$F(\lambda)=0$. Since $F$ is non increasing and  $\lambda \geq0$, we get $F(0)=0$ and this contradicts  our assumption on $F$.
 \end{proof}
 We shall first use this lemma to prove the first assertion {\em i)} of Theorem \ref{thm-Euler2}.  Since the level sets of $\psi_0$ are of zero measure, it is clear that
 $Supp(\psi_0) = \Omega$ and that $\mu_0(0)= \mbox{meas}(\Omega)$.  We also deduce from this property that the distribution function $\mu_{0}$ of $\psi_0$ is a continuous and non increasing function from $[0,\|\psi_0\|_{\infty}[$ to $]0, \mbox{meas}(\Omega)]$.
 This implies that $\psi_0^{\sharp}$ is strictly decreasing on 
 $]0, \mbox{meas}(\Omega)]$ and assertion {\em i)} of Theorem \ref{thm-Euler2} follows. 

 Now we shall prove assertion {\em ii)} of Theorem \ref{thm-Euler2}. First we have 
 $$ a_{\psi_0}(t) = \mbox{meas}\{x\in \Omega; \psi_0(x) <t \} =  \mbox{meas}(\Omega) - \mu_{0} (t), \quad \forall t\geq 0.$$
 We also have
 $$e_{min}= \mbox{ess inf} \ \psi_0,\quad \mbox{and} \quad  e_{max}= \max\{ e\in \RR: \mu_0(e) >0\} = \|\psi_0\|_{\infty}.$$
Therefore the pseudo-inverse $b_{\psi_0}$  of $a_{\psi_0}$ is given for all $\mu\in [0, \mbox{meas}(\Omega)[$ by 
\begin{align*} b_{\psi_0}(\mu) &= \sup\{ t<e_{max}:  a_{\psi_0}(t)  \leq \mu\}  \\
&=   \sup\{ t <e_{max}:    \mu_{0} (t) > \mbox{meas}(\Omega) - \mu\} \\
&=  \psi_0^{\sharp}\left( \mbox{meas}(\Omega) - \mu\right).
\end{align*}
In particular $|b_\sigma (\mu)| \leq  \|\psi_0\|_{L^\infty}$. Now, using this inequality and the obtained expression of $b_\sigma$,  assertion {\em ii)} is directly obtained by combining identity \fref{alhadra1} with inequality \fref{ineq11}.


\begin{thebibliography}{9}
\bibitem{arnold1} Arnol'd, V. I.: Conditions for nonlinear stability of stationary plane curvilinear flows
of an ideal fluid. Soviet Math. Doklady 162, 773-777 (1965); Translation of Dokl. Akad.
Nauk SSSR 162, 975-998 (1965)
\bibitem{Ars} Arsen'ev, A. A.: Global existence of a weak solution of Vlasov's system of equations. U.S.S.R. Comput. Math. Math. Phys. 15, 131-141 (1975).
\bibitem{BD} Bardos, C.; Degond, P.: Global existence for the Vlasov-Poisson equation in 3 space variables with small initial data. Ann. Inst. Henri Poincar\'e, Analyse non linéaire. 2 (1985), pp. 101-118.
\bibitem{Bed.Mas} Bedrossian, J.; Masmoudi, N.: Inviscid damping and the asymptotic stability of planar shear flows in the 2D Euler equations, arXiv:1306.5028 2013. To appear in Publ. Math. l'IHES.
\bibitem{Burch1} A. Burchard. A short course on rearrangement inequalities. Preprint, 2009.
\bibitem{Burt1}  Burton, G. R.: Global Nonlinear Stability for Steady Ideal
Fluid Flow in Bounded Planar Domains.  Arch. Rational Mech. Anal. 176 (2005) 149?163.
\bibitem{Burt2}  Burton, G. R.: Variational problems on classes of rearrangements and multiple configurations
of steady vortices. Ann. Inst. H. Poincaré - Anal. Non Linéaire 6, 295-319
(1989)
\bibitem{cag-rou} Caglioti, E.; Rousset, F.:  Quasi-stationary states for particle systems in the mean-field limit. 
J. Stat. Phys. (2007) 129: 241-263.
\bibitem{dpl1} Diperna, R. J.; Lions, P.-L., Global weak solutions of kinetic equations,  Rend. Sem. Mat. Univ. Politec. Torino  46  (1988),  no. 3, 259--288 (1990).
\bibitem{dpl2} Diperna, R. J.; Lions, P.-L., Solutions globales d'\'equations du type Vlasov-Poisson,  C. R. Acad. Sci. Paris S\'er. I Math.  307  (1988),  no. 12, 655--658.
\bibitem{GS} Glassey, R.; Schaeffer, J.: On symmetric solutions of the relativistic Vlasov-Poisson system, Commun. Math. Phys., 101 (1985), pp. 459-473.
\bibitem{GR1} Guo, Y., Rein, G.: Stable steady states in stellar dynamics.  {\em Arch. Ration. Mech. Anal.} , 147: 225-243, (1999).
\bibitem{GR2} Guo, Y., Rein, G.: Isotropic steady states in galactic dynamics. {\em Commun. Math. Phys. } , 219: 607-629, (2001).
\bibitem{GR3} Guo, Y., Rein, G.: A non variational approach to nonlinear stability in stellar dynamics applied to the King model. {\em Commun. Math. Phys.} , 271: 489-509, (2007).
\bibitem{HL1} Hardy G. H.;  Littlewood J. E. and Polya G.: Inequalities. Cambridge University Press (1952, translated from the 1934 German original).
\bibitem{HMP} Henrot, A., Pierre, M. Variation et optimisation de formes: Une Analyse G\'eom\'etrique.
Mathématiques et Applications, {\em Springer}.
\bibitem{HorstHunze} Horst, E.; Hunze, R., Weak solutions of the initial value problem for the unmodified nonlinear Vlasov equation, Math. Methods Appl. Sci. 6 (1984), no. 2, 262--279.
\bibitem{Ill-Neun} Illner, R.,  Neunzert, H.:  An existence theorem  for the unmodified Vlasov equation. Math. Methods Appl. Sci.1 (4), 530-544 (1979).
\bibitem{LMR1} Lemou M., M\'ehats F.: Raphael P.: Stable ground states for the relativistic gravitational Vlasov-Poisson system. {\em Commun. Partial Diff. Equ.},  34(7): 703-721, (2009).
\bibitem{LMR2} Lemou M., M\'ehats F.: Raphael P.: A new variational approach to the stability of gravitational systems. {\em Commun. Math. Phys.},  302: 161-224, (2011).
\bibitem{LMR}  Lemou M., M\'ehats  F.: P. Raphael P.: Orbital Stability of Spherical Galactic Models. {\em Invent math},  187: 145-194, (2012).
\bibitem{Lieb-Loss} Lieb, E. H., Loss M.:  Analysis.  2nd edition.  {\em Graduate Studies in Mathematics},  vol 14, American Mathematical Society, Providence (2012).
\bibitem{LP} Lions, P.-L.; Perthame, B., Propagation of moments and regularity for the $3$-dimensional Vlasov-Poisson system,  Invent. Math. 105 (1991), no. 2.
\bibitem{MP1} Marchioro, C. Pulvirenti, M.: Some considerations on the nonlinear stability of stationary planar Euler flows. {\em Commun. Math. Phys.},  100(3): 343-354, (1985).
\bibitem{MP2} Marchioro, C. Pulvirenti, M.: A note on the nonlinear stability of a spatially symmetric Vlasov-Poisson flow. {\em Math. Methods Appl. Sci.},  8(2): 284-288, (1986).
\bibitem{MV1} Mouhot, C., Villani, C.: On Landau damping. {\em Acta Math.}  207 (2011), no 1, 29-201.
\bibitem{Pf} Pfaffelmoser, K., Global classical solutions of the Vlasov-Poisson system in three dimensions for general initial data, J. Diff. Eq. {\bf 95} (1992), 281-303.
\bibitem{S1} Schaeffer, J., Global existence of smooth solutions to the Vlasov-Poisson system in three dimensions, Comm. Part. Diff. Eq. {\bf 16} (1991), 1313-1335.
\bibitem{Yud} V. Yudovich.  Non stationary flow of an ideal incompressible liquid. U. S. S. R. Comput. Math. and Math. Phys. 3, 1407-1456 (1963); translation of Zh. Vychisl. Mat. i Mat. Fiz. 6, 1032-1066.  (1963).

\end{thebibliography}
 \end{document}